\newcommand{\FF}{\mathbb{F}}
\newcommand{\NN}{\mathbb{N}}
\newcommand{\scprod}[1]{\left\langle #1 \right\rangle}
\newcommand{\embrace}[3]{\left#1 #3 \right#2}
\newcommand{\lpar}[1]{\embrace{(}{)}{#1}}
\newcommand{\im}{\textnormal{Im}}
\newcommand{\id}{\textnormal{Id}}
\renewcommand{\ker}{\textnormal{Ker}}
\newcommand{\syma}{\textnormal{Sym}^3(A^2)}
\newcommand{\symf}{\textnormal{Sym}^3(K^2)}
\DeclareMathOperator{\disc}{Disc}
\DeclareMathOperator{\QQ}{\mathbb{Q}}
\DeclareMathOperator{\ZZ}{\mathbb{Z}}
\DeclareMathOperator{\spl}{SL}
\DeclareMathOperator{\gl}{GL}
\DeclareMathOperator{\wc}{WC}
\DeclareMathOperator{\pp}{\mathbb{P}}
\DeclareMathOperator{\gal}{Gal}
\DeclareMathOperator{\sym}{Sym}
\DeclareMathOperator{\tr}{Tr}
\DeclareMathOperator{\tors}{tors}
\newtheorem{X}{X}[section]
\newtheorem{corollary}[X]{Corollary}
\newtheorem{lemma}[X]{Lemma}
\newtheorem{proposition}[X]{Proposition}
\newtheorem{theorem}[X]{Theorem}
\theoremstyle{definition}
\newtheorem{definition}[X]{Definition}
\newtheorem{example}[X]{Example}
\newtheorem{remark}[X]{Remark}
\DeclareFontFamily{U}{wncy}{}
\DeclareFontShape{U}{wncy}{m}{n}{<->wncyr10}{}
\DeclareSymbolFont{mcy}{U}{wncy}{m}{n}
\DeclareMathSymbol{\Sh}{\mathord}{mcy}{"58}
\begin{document}

\title[Monogenity of pure cubic number fields and elliptic curves]{Determining monogenity of pure cubic number fields using elliptic curves}

\author{Jordi Guàrdia}
\email{jordi.guardia-rubies@upc.edu}

\author{Francesc Pedret}
\email{francesc.pedret@upc.edu}

\address{Departament de Matemàtiques, Universitat Politècnica de Catalunya}

\begin{abstract}
    We study monogenity of pure cubic number fields by means of Selmer groups of certain elliptic curves. A cubic number field  with discriminant $D$ determines a unique nontrivial $\mathbb{F}_3$-orbit in the first cohomology group of the elliptic curve $E^D: y^2 = 4x^3 + D$ with respect to a certain 3-isogeny $\phi$.  Orbits corresponding to monogenic fields must lie in the soluble part of the Selmer group $S^{\phi}(E^D/\QQ)$, and this gives a criterion to discard monogenity. From this, we can derive bounds on the number of monogenic cubic fields in terms of the rank of the elliptic curve. We can also determine the monogenity of many concrete pure cubic fields assuming GRH.   
\end{abstract}

\date{\today}

\maketitle

\section{Introduction}

The ring of integers of a number field has played a key role in the development of algebraic number theory since its very beginning. The well-known cubic example presented by Dedekind enlightened  the difficulty of the problem and posed two initial problems: the determination of the ring of integers of a number field and its monogenity. The search for solutions to both problems has generated a number of  important concepts and techniques  in number theory. This paper is focused on the second one: we will show how the   geometric techniques introduced by Alpöge, Bhargava and Shnidman (\cite{Bhargava}) to study densities of  non-monogenic fields can be adapted to prove the non-monogenity of many concrete cubic number fields. 

A number field is called monogenic if its ring of integers $\mathcal{O}_L $ has a {\it power integral basis}, that is,  $\mathcal{O}_L = \ZZ[\alpha]$ for some $\alpha \in L$ . Even if  there are efficient algorithms (\cite{MontesAlgorithm}) to determine $\mathcal{O}_L$,  the monogenity is still an open problem from the computational point of view. Many particular results for families of number fields are known (\cite{WILDANGER2000188}, \cite{GAAL1993563}, \cite{GAAL199690}, \cite{Gaal89}), based on solving Thue equations via Diophantine approximation. In this paper, we study monogenity of cubic number fields by means of  elliptic curves, a newer approach introduced by Alpöge, Bhargava and Shnidman in \cite{Bhargava}.  While they prove density results, we adapt their ideas to study concrete cubic fields, and succeed in proving the non-monogenity of most of the considered cases. We also provide bounds on the number of monogenic pure cubic fields based on the rank of certain elliptic curves naturally attached to the field.

Given a relative extension $L/K$ of number fields,   $L$ is said to be \textit{monogenic} over $K$ if $\mathcal{O}_L = \mathcal{O}_K[\alpha]$ for some $\alpha \in L$. There exists a homogeneous  \textit{index form}  $I_{L/K}$ on $n - 1$ variables such that   $L$ is monogenic over $K$ if, and only if, the diophantine equation $I_{L/K}(X_1,\dots,X_{n-1}) = \pm 1$ has a solution in $\mathcal{O}_K$. In this paper, we define $L$ to be \textit{quasi-monogenic} if $I_{L/K}(X_1,\dots,X_{n-1}) = \pm 1$ has a $K$-rational solution. While quasi-monogenity is weaker than monogenity, it has a good arithmetic-geometric description. For a given $k \in K$, we denote by $E^k$ the elliptic curve defined by $Y^2 = 4X^3 + k$. 
 
\begin{theorem}
    \label{theorem: characterization of quasi-monogenity}
    Let $L/K$ be a cubic relative extension of number fields and let $D$ be the discriminant of $L$ over $K$. Assume $\mathcal{O}_L$ is free over $\mathcal{O}_K$. Then, $L$ is quasi-monogenic over $K$ if, and only if, $L$ is isomorphic to $L_P=K[X]/(X^3 - x_0X^2 + D)$ for some $P =(x_0,y_0)\in E^{-27D}(K)$.
\end{theorem}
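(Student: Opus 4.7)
The key reformulation is that quasi-monogenity is equivalent to the existence of some $\alpha \in L$ with $\disc_{L/K}(\alpha) = D$. Writing $\alpha = z + x\omega_1 + y\omega_2$ in an integral basis of $\mathcal{O}_L$, we have $\disc(\alpha) = I_{L/K}(x,y)^2 D$, so the condition $I_{L/K}(x,y) = \pm 1$ with $(x,y) \in K^2$ translates exactly to $\disc(\alpha) = D$. Since shifting $\alpha$ by a scalar in $K$ preserves the discriminant, we may further assume $\tr(\alpha) = 0$, so the minimal polynomial of $\alpha$ is a depressed cubic $X^3 + pX + q$ with $-4p^3 - 27q^2 = D$.

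For the forward implication, the plan is to apply an explicit quadratic Tschirnhaus transformation. I would set $\gamma := 3\alpha^2 + p$ and compute its minimal polynomial using Newton's identities for the power sums of the conjugates $\alpha_i$ (namely $P_1 = 0$, $P_2 = -2p$, $P_3 = -3q$). A direct symmetric function calculation yields $\tr(\gamma) = -3p$, $\sigma_2(\gamma) = 0$, and $N(\gamma) = 4p^3 + 27q^2 = -D$. Hence the minimal polynomial of $\gamma$ is $X^3 + 3pX^2 + D = X^3 - x_0 X^2 + D$ with $x_0 := -3p$, and setting $y_0 := 27q$ one verifies $y_0^2 = 729 q^2 = 4 x_0^3 - 27 D$ by direct substitution. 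Thus $P := (x_0, y_0) \in E^{-27D}(K)$ and $L \cong L_P$.

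For the converse, the plan is to invert this transformation explicitly. Given $L \cong L_P$ with $P = (x_0, y_0) \in E^{-27D}(K)$, let $\gamma$ be the image of $X$ in $L$; note $y_0 \neq 0$, otherwise $X^3 - x_0 X^2 + D$ would have a repeated root and $L_P$ would not be a field. I would set $\alpha := y_0/(9\gamma - 6x_0) \in L$ and substitute the inverse relation $\gamma = 3\alpha^2 - x_0/3$ into $\gamma^3 - x_0 \gamma^2 + D = 0$. After multiplying through by $27$, the expression rearranges as $(27\alpha^3 - 9 x_0 \alpha)^2 = y_0^2$, so $\alpha$ satisfies the depressed cubic $X^3 - (x_0/3) X - y_0/27 = 0$. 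Its discriminant is $(4 x_0^3 - y_0^2)/27 = D$, so $\disc(\alpha) = D$ and $L$ is quasi-monogenic.

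The main obstacle I anticipate is the discovery of the substitution $\gamma = 3\alpha^2 + p$. It is naturally motivated by the classical covariant theory of binary cubic forms, in which the Hessian and Jacobian covariants of the index form $I_{L/K}$ satisfy a syzygy matching the Weierstrass equation $y^2 = 4x^3 - 27D$ of $E^{-27D}$. Alternatively, one can derive the formula from scratch with the ansatz $\gamma = A\alpha^2 + B\alpha + C$ and solving for $A, B, C \in K$ so that the middle coefficient of the minimal polynomial of $\gamma$ vanishes and the constant term equals $D$. Once the formula is secured, both verifications reduce to routine polynomial identities.
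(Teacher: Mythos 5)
Your proof is correct, but it takes a genuinely different route from the paper. The paper proves this theorem with its cohomological machinery: quasi-monogenity puts a rational point on the covering curve $C_{\mathcal{B}}: I_{\mathcal{B}}(X,Y)=Z^3$, hence places the class $\alpha_L$ in the soluble part of $S^{\phi_D}(E^D/K)$, i.e.\ $\alpha_L=[\xi_P]$ for some $P\in E^{-27D}(K)$; comparing the explicit Galois-theoretic descriptions of $\xi_L$ and $\xi_P$ (Lemmas \ref{lemma: cocycle cubic field} and \ref{lemma: explicit cocycle}) then forces $\gal(\overline{K}/L)=\gal(\overline{K}/L_P)$, so $L\cong L_P$; the converse is read off from Lemma \ref{lemma: explicit cocycle}. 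You instead argue purely algebraically: quasi-monogenity is the existence of a generator $\alpha$ with $\disc(1,\alpha,\alpha^2)=D$ (valid since $\disc(1,\alpha,\alpha^2)=I_{\mathcal{B}}(x,y)^2D$ and $D\neq 0$), and the quadratic Tschirnhaus map $\alpha\mapsto\gamma=3\alpha^2+p$ (essentially the Hessian covariant, i.e.\ the same syzygy the paper uses to build $\pi_f$, but applied at the level of elements rather than coverings) carries a trace-zero generator with $\disc=D$ to a root of $X^3-x_0X^2+D$ with $(x_0,y_0)=(-3p,27q)\in E^{-27D}(K)$, and your explicit inverse $\alpha=y_0/(9\gamma-6x_0)$ recovers a generator satisfying $X^3-(x_0/3)X-y_0/27$, of discriminant $D$. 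I checked the identities: the symmetric-function computation gives characteristic polynomial $X^3+3pX^2+D$, the point condition $729q^2=4x_0^3-27D$ holds, and $(3\gamma+x_0)(9\gamma-6x_0)^2=9y_0^2$ verifies your claimed inverse relation, so the converse computation closes. What your approach buys is a short, self-contained and fully effective equivalence (explicit formulas in both directions) needing no Selmer-group input; what it does not give is the cocycle-level statement (the identification of $[P]$ with the class $\pm\xi_{L_P}$ in $S^{\phi_D}(E^D/K)$) that the paper needs anyway for its bounds and algorithms in later sections. Two small points you should make explicit: in the forward direction $\gamma\notin K$ (since $K(\alpha^2)=K(\alpha)$ for a cubic extension), and in the converse $\alpha\notin K$ (since $\gamma\notin K$ and $y_0\neq 0$), so that the computed cubics really are minimal polynomials and the discriminant identity applies; also, the relation $\gamma=3\alpha^2-x_0/3$ for your chosen $\alpha$ is a consequence to be verified from $\gamma^3-x_0\gamma^2+D=0$ and $y_0^2=4x_0^3-27D$, not an assumption.
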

This suggests a new strategy to identify all quasi-monogenic cubic fields over $K$ with a given discriminant $D$ satisfying the hypothesis in \ref{theorem: characterization of quasi-monogenity}: it suffices  to compute the quotient of $E^{-27D}(K)$ by $\phi_D(E^D(K))$ for a certain 3-isogeny $\phi_D$. Consequently, the computation of the Mordell-Weil group of $E^{-27D}(K)$ becomes the crucial step.

For cubic pure number fields $L=\QQ(\sqrt[3]{m})$,  the discriminant has the specific form $D=-3n^2$ for $n\in\NN$, and we can show that the corresponding field $L_P$ is again a pure cubic field and $L_P\simeq\QQ\lpar{\sqrt[3]{\frac{y_0 - 9n}{y_0 + 9n}}}$.  Indeed, a cubic number field with discriminant of  the form $D=-3n^2$ is always a pure cubic field. We are able to give an efficient algorithm to compute all the quasi-monogenic pure cubic number fields of a given discriminant, given the Mordell-Weil group of $E^{-27D}$. In practice, most fields are not quasi-monogenic, so this allows us to narrow down the possible monogenic fields substantially. As a byproduct of our methods, we obtain sharp bounds for the number of isomorphism classes of quasi-monogenic pure cubic number fields of a given discriminant in terms of $E^{-27D}(\QQ)/\phi_D(E^D(\QQ))$.

Our approach is the following: Alpöge, Bhargava and Shnidman showed in \cite{Bhargava} how to relate $I_{L/K}$ to a class in the $\phi_D$-Selmer group of $E^{-27D}$. If $L/K$ is monogenic, then said class lies in the soluble part of the $\phi_D$-Selmer group. Our methods involve the explicit computation of these classes in order to determine which cubic relative extensions determine cocycles in the soluble part of the $\phi_D$-Selmer group. 

The structure  of the paper is as  follows. In section \ref{section: preliminaries}, we provide a brief survey on monogenity of free rank 3 algebras over a Dedekind domain $A$ and index forms. 

In section \ref{section: jacobian of the index form}, we review the techniques developed by Alpöge, Bhargava, Shnidman and Elkies in \cite{Bhargava} and \cite{Bhargava_2019}; they  show how to introduce  elliptic curves in the monogenity context. The general idea is to relate the monogenity of a cubic number field of discriminant $D$ to the existence of a certain rational point on the elliptic curve $E^{D}:Y^2 = 4X^3 + D$. This  naturally leads to the notion of quasi-monogenity, introduced in section \ref{section: preliminaries}. 

Some basic bounds on the number of quasi-monogenic cubic  fields of fixed discriminant are given  in section \ref{section: first bounds}. In sections \ref{section: cocycle associated to cubic fields} and \ref{section: from points to orbits}, we develop the machinery necessary to develop, in section    \ref{section: pure cubic number fields},  algorithms to determine the monogenity of pure cubic fields assuming the Generalized Riemann Hypothesis (GRH). Finally, in section \ref{section: computations}, we apply these algorithms to determine the quasi-monogenity of 4555 of the 4619 cubic fields appearing in the LMFDB database (\cite{lmfdb}) whose monogenity was still unknown.

\section{Preliminaries on monogenity of cubic algebras}
\label{section: preliminaries}
We give a brief survey on monogenity and index form. The ideas in this section are well known in the case of number fields, but we expose them in the more general case of free algebras over a Dedekind domain.   A scheme-theoretic version of the topic is introduced in  (\cite[Def. 1.2]{ArpinI}). Throughout this section, $A$ is a Dedekind domain and $B$ is a  free $A$-algebra of finite rank $n$. We recall the definition of the discriminant: 

\begin{definition}{\cite[pp. 31,32]{milneANT}} The \textit{discriminant} of  $\beta_1, \dots, \beta_n\in B$ is defined as
    $$
    \disc(\beta_1,\dots,\beta_n) := \det(\tr_{B/A}(\beta_i\beta_j)),
    $$
    where $\tr_{B/A}(\beta)$ denotes the trace of  the $A$-linear map 
    $x \mapsto \beta x$ from $B$ to $B$. 
\end{definition}
\begin{lemma}{\cite[Lem. 2.23]{milneANT}}
    \label{lemma: disc of elements}
    With the same notation as above, let $\gamma_1,\dots, \gamma_n$ be elements of $B$ such that $\gamma_j = \sum a_{ji}\beta_j$, $a_{ij} \in A$. Then, 
    $$
    \disc(\gamma_1,\dots,\gamma_n) = \det(a_{ij})^2\disc(\beta_1,\dots,\beta_n).
    $$
\end{lemma}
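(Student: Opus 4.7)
The plan is to prove this by a direct matrix computation, reducing the formula for $\disc(\gamma_1,\dots,\gamma_n)$ to $\disc(\beta_1,\dots,\beta_n)$ via the change-of-basis matrix $M=(a_{ij})$. First I would interpret (correcting the apparent typo) the hypothesis as $\gamma_j = \sum_i a_{ji}\beta_i$, so that $M=(a_{ji})$ is the matrix expressing the $\gamma$'s in terms of the $\beta$'s, and I would write $T_\beta := (\tr_{B/A}(\beta_i\beta_k))$ and $T_\gamma := (\tr_{B/A}(\gamma_j\gamma_l))$ for the two Gram matrices that appear in the definition of the discriminant.

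Next I would expand $\tr_{B/A}(\gamma_j\gamma_l)$ using bilinearity and $A$-linearity of the trace:
\[
\tr_{B/A}(\gamma_j\gamma_l)=\tr_{B/A}\Bigl(\sum_{i}a_{ji}\beta_i\cdot\sum_{k}a_{lk}\beta_k\Bigr)=\sum_{i,k}a_{ji}\,\tr_{B/A}(\beta_i\beta_k)\,a_{lk}.
\]
This identifies $T_\gamma = M\,T_\beta\,M^{T}$ as a matrix equation in $A^{n\times n}$. Taking determinants and using $\det(M^T)=\det(M)$ yields
\[
\disc(\gamma_1,\dots,\gamma_n)=\det(T_\gamma)=\det(M)^2\det(T_\beta)=\det(a_{ij})^2\disc(\beta_1,\dots,\beta_n),
\]
which is the claimed identity.

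There is essentially no obstacle: the argument is purely formal linear algebra over the commutative ring $A$, and the only things used are that $\tr_{B/A}$ is $A$-linear, that multiplication in $B$ is bilinear, and the multiplicativity of $\det$ (valid for matrices with entries in any commutative ring). The mild care points are cosmetic: (i) making sure the index convention matches the statement (writing out rows vs.\ columns of $M$ carefully so that the transpose appears on the correct side), and (ii) noting that freeness of $B$ over $A$ is exactly what is needed so that the trace on $B$ is well-defined as a map into $A$ and its matrix expression behaves as in the vector-space case.
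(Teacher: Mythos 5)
Your proof is correct and is essentially the standard argument: the paper gives no proof of its own, simply citing Milne, and the cited proof is exactly your computation $T_\gamma = M\,T_\beta\,M^{T}$ followed by taking determinants. Your correction of the index typo ($\gamma_j=\sum_i a_{ji}\beta_i$) is also the intended reading.
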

\begin{definition}{\cite[p. 33]{milneANT}}
    Let $B$ be a rank $m$ $A$-algebra. The \textit{discriminant of $B$ over $A$}, denoted by $\disc(B/A)$, is defined to be the class of $\disc(\omega_1,\dots,\omega_n)$ in $A/A^{\times 2}$, where $\{\omega_1,\dots,\omega_n\}$ is any $A$-basis of $B$. Lemma (\ref{lemma: disc of elements}) then implies that $\disc(B/A)$ is well defined and is invariant by isomorphisms of $A$-algebras.
\end{definition}

\begin{definition}
    The \textit{index map} of a rank $n$ free algebra $B$ over $A$ is the map $f_{B/A}:B/A \to \bigwedge^n B$ defined by $f_{B/A}(\overline{\gamma}) = 1\wedge \gamma \wedge \cdots \wedge \gamma^{n - 1}$.
\end{definition}
It is shown easily by induction on $n$ that this map is well defined. Given an $A$-basis $b_1,\dots, b_n$ of $B$, the exterior power $\bigwedge^n B$ can be identified with $A$ itself, and then $f_{B/A}(\overline{\gamma})=\operatorname{Disc}(1,\gamma,\dots,\gamma^{n-1}) b_1\wedge\cdots\wedge b_n$. This leads to the following definition.

\begin{definition}{\cite[Def. 1.2]{ArpinI}}
    A free finite rank $A$-algebra $B$ is \textit{monogenic} if there exists $\gamma\in B$ such that $\operatorname{Disc}(1,\gamma,\dots,\gamma^{n-1})\in A^\ast$. Equivalently, if there is $\gamma\in B$ such that $B=A[\gamma]$.
\end{definition}
 
\begin{remark} 
    For a relative extension  $L/K$ of number fields, $L$ is said to be \textit{monogenic over $K$} if its ring of integers $\mathcal{O}_L$ is monogenic over the ring $\mathcal{O}_K$ of integers of $K$ .
\end{remark}

The monogenity condition of an algebra can be checked through  a polynomial equation. Fixing a basis $\mathcal{B}=\{b_1,\dots, b_n\}$ of $B$ as before, we have $f_{B/A}(x_1 b_1+\dots+x_n b_n)=I_{\mathcal {B}}(x_1,\dots,x_n)b_1\wedge\cdots\wedge b_n$ for some homogeneous form $I_{\mathcal{B}}\in A[X_1,\dots X_n]$, and it is clear that $B$ is monogenic if and only if this form represents a unit over $A$. However, in the concrete cases we are  interested in, it is usual to reduce the form $I_{\mathcal{B}}$ to $n-1$ variables.

\begin{lemma}
    Let $A$ be a Dedekind domain and let $B$ be a free $A$-algebra of rank $n$. Then, $B$ has an $A$-basis of the form $\{1,\omega_2,\dots,\omega_n\}$
\end{lemma}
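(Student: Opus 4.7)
The plan is to reduce the statement to two standard facts over a Dedekind domain: (i) the cyclic submodule $A\cdot 1$ is a direct summand of $B$, and (ii) the complementary summand is forced to be free.

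To establish (i), I would fix any $A$-basis $\{e_1,\dots,e_n\}$ of $B$ and expand $1=a_1e_1+\dots+a_ne_n$. The key observation is that the ideal $(a_1,\dots,a_n)\subseteq A$ must equal $A$: otherwise it would be contained in some maximal ideal $\mathfrak{p}$, and reducing modulo $\mathfrak{p}$ would send $1\in B$ to $0$ in the quotient ring $B/\mathfrak{p}B$, forcing that ring to be trivial; but $B$ being free of rank $n\geq 1$ implies $B/\mathfrak{p}B\cong(A/\mathfrak{p})^n$ is a nonzero $A/\mathfrak{p}$-vector space, a contradiction. Unimodularity of the coordinate vector of $1$ then produces an $A$-linear retraction $\phi\colon B\to A$ with $\phi(1)=1$, yielding the decomposition $B=A\cdot 1\oplus C$ with $C:=\ker\phi$ projective of rank $n-1$.

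To establish (ii), I would invoke the structure theorem for finitely generated projective modules over a Dedekind domain to write $C\cong\mathfrak{a}\oplus A^{n-2}$ for some invertible fractional ideal $\mathfrak{a}$ (whose class is the Steinitz invariant of $C$). Taking top exterior powers in the decomposition $B\cong A\oplus C$ yields $\bigwedge^n B\cong\bigwedge^{n-1}C\cong\mathfrak{a}$; since $B$ is free of rank $n$, $\bigwedge^n B\cong A$, which forces $\mathfrak{a}$ to be principal and hence $C$ to be free of rank $n-1$. Any $A$-basis $\{\omega_2,\dots,\omega_n\}$ of $C$ then extends $1$ to the required basis $\{1,\omega_2,\dots,\omega_n\}$ of $B$.

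The main obstacle is the first step, where the algebra structure of $B$ enters essentially: the retraction exists only because $1\in B$ is the multiplicative identity and therefore survives every reduction $B\to B/\mathfrak{p}B$; for a generic element of a free $A$-module this unimodularity would fail. The Steinitz/determinant computation in the second step is then routine bookkeeping.
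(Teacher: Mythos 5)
Your proof is correct, but it reaches the splitting $B\cong A\oplus(\text{rank }n-1\text{ complement})$ by a genuinely different route than the paper. The paper works with the quotient $B/A$: it shows $B$ is integral over $A$ (noetherianity) and then uses that $A$ is integrally closed to prove $B/A$ is torsion-free, hence projective over the Dedekind domain $A$, so the exact sequence $0\to A\to B\to B/A\to 0$ splits; freeness of $B/A$ is then deduced from the classification of finitely generated modules over a Dedekind domain. You instead construct the splitting directly: the coordinates of $1$ in any basis generate the unit ideal (else $B=1\cdot B\subseteq\mathfrak{p}B$ would force $B/\mathfrak{p}B\cong(A/\mathfrak{p})^n$ to vanish), and this unimodularity yields a retraction $\phi\colon B\to A$ with $\phi(1)=1$, so $B=A\cdot 1\oplus\ker\phi$. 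This first step is more elementary and more general --- it uses only that $1$ is the ring identity and works over any commutative base ring, with no appeal to integral closedness --- whereas the paper's argument identifies the complement canonically as $B/A$, which is conceptually aligned with the index map $f_{B/A}$ being defined on $B/A$. For the freeness of the complement you make explicit (via the Steinitz invariant and top exterior powers, $\bigwedge^n B\cong\bigwedge^{n-1}C\cong\mathfrak{a}\cong A$) a point the paper passes over quickly when it asserts that $B/A$ is free ``by the classification theorem''; indeed the structure theorem alone gives $A^{n-2}\oplus\mathfrak{a}$, and triviality of the Steinitz class of the complement is exactly what your determinant computation supplies, so your write-up is if anything slightly more complete on that step. (Only cosmetic caveat: the expression $\mathfrak{a}\oplus A^{n-2}$ presumes $n\geq 2$; for $n=1$ the statement is trivial by the same unimodularity argument.)
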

\begin{proof}
    Since $A$ is noetherian, $B \cong A^n$ is also noetherian. Thus, for any $b \in B$, $A[b]$ is a finitely generated submodule of $B$, which means that $B$ is integral over $A$ (see \cite[Prop. 2.4]{milneANT}). 
    
    Now, $B/A$ is torsion free: otherwise, we would have elements $b \in B\setminus A$ and $r, s \in A\setminus\{0\}$ such that $bs = r$. This means $b$ is in the fraction field of $A$. Combined with the fact that $b$ is integral over $A$ and $A$ is integrally closed, we conclude that $b$ is an element of $A$, contradicting our choice of $b$.

    Since $B/A$ is torsion free and finitely generated, it is projective (see \cite[\S 16.3, Cor. 23]{AAlgebra}), so $B \cong A \oplus B/A$. By the classification theorem of modules over Dedekind domains (see \cite[\S 16.2, Thm. 22]{AAlgebra}), we obtain that $B/A$ is free of rank $n - 1$. Pulling back  a basis $\{1,\overline{\omega}_2,\dots, \overline{\omega}_n\}$ of $A\oplus B/A$, we obtain an $A$-basis of $B$ of the form $\{1, \omega_2,\dots, \omega_n\}$, where $\omega_i$ is a lift of $\overline{\omega}_i$ to $B$ for $i = 2, \dots, n$.
\end{proof}

Since the class of $x_1 + x_2\omega_2 + \cdots + x_n\omega_n$ in $B/A$ does not depend on $x_1$, we conclude that $I_{\mathcal{B}}$ does not depend on $X_1$.

\begin{definition}
    The \textit{index form} of a free rank $n$ algebra $B$ over a Dedekind domain $A$ is the $\gl_{n-1}(A)$-class of the degree $n(n-1)/2$ homogeneous form $I_{\mathcal{B}}\in A[X_2,\dots, X_n]$ for any $A$-basis $\mathcal{B}=\{1,b_2,\dots, b_n\}$ of $B$.
It satisfies
$ 
f_{B/A}(x_1 + x_2b_2 + \cdots + x_n b_n)=I_{\mathcal {B}}(x_2, \dots, x_n) 1\wedge b_2 \wedge\cdots\wedge b_n.
$
\end{definition}

From now on, we shall restrict to $n = 3$. In this particular case, there is a tight relation between the index form and the defining polynomial of the extension of the corresponding fraction fields, which leads to the irreducibility of the index form.

\begin{proposition}
    \label{proposition: irreducibility of index form}
    Let $B$ be an integral free $A$-algebra of rank 3 and let $L$ and $K$ be their fields of fractions, respectively. Assume that the characteristic of $A$ is different from 3. Take an $A$-basis $\mathcal{B} = \{1,\omega_2,\omega_3\}$ of $B$. Then, $I_{B/A}(X,1)$ is a defining polynomial for $L$ over $K$ and, in particular, $I_{\mathcal{B}}$ is irreducible.
\end{proposition}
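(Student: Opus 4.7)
The plan is to compute $I_{\mathcal{B}}(X,1)$ in a particularly convenient basis and then transfer the result via a change of basis. Since $\{1,\omega_2,\omega_3\}$ is a $K$-basis of $L$, we have $\omega_2\notin K$; the primality of $[L:K]=3$ then forces $K(\omega_2)=L$, and so $\mathcal{B}''=\{1,\omega_2,\omega_2^2\}$ is another $K$-basis of $L$. Writing $\omega_3 = u + v\omega_2 + w\omega_2^2$ with $u,v,w\in K$, the coefficient $w$ is necessarily nonzero, since otherwise $\omega_3$ would lie in $K+K\omega_2$ and contradict the independence of $\mathcal{B}$.

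In the power basis, let $m_{\omega_2}(T)=T^3+pT^2+qT+r\in K[T]$ be the minimal polynomial of $\omega_2$ (of degree exactly $3$, since $K(\omega_2)=L$). Expanding $\gamma = X\omega_2+\omega_2^2$, reducing $\gamma^2$ via $\omega_2^3=-p\omega_2^2-q\omega_2-r$, and computing the determinant of the coordinates of $(1,\gamma,\gamma^2)$ in $\mathcal{B}''$ yields
\[
I_{\mathcal{B}''}(X,1) \;=\; X^3 - 2pX^2 + (p^2+q)X + (r-pq) \;=\; m_{\omega_2}(X-p).
\]
In particular $\omega_2+p\in L$ is a root, and $I_{\mathcal{B}''}(X,1)$ is irreducible over $K$ because $m_{\omega_2}$ is.

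To transfer to $\mathcal{B}$, a short wedge computation using $1\wedge\omega_2\wedge\omega_3 = w\cdot(1\wedge\omega_2\wedge\omega_2^2)$ (both $1\wedge\omega_2\wedge 1$ and $1\wedge\omega_2\wedge\omega_2$ vanish) together with the homogeneity of the index form gives the transformation law
\[
I_{\mathcal{B}}(X,1) \;=\; w^{2}\, I_{\mathcal{B}''}\!\left(\tfrac{X+v}{w},\,1\right).
\]
Thus the element $X_0 := w(\omega_2+p) - v$ is a root of $I_{\mathcal{B}}(X,1)$ in $L$; since $w\neq 0$ and $\omega_2\notin K$, $X_0\notin K$, so the primality of $[L:K]$ forces $K(X_0)=L$ and hence $L\cong K[X]/(I_{\mathcal{B}}(X,1))$. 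Irreducibility of the binary form $I_{\mathcal{B}}(X_2,X_3)\in K[X_2,X_3]$ follows because $I_{\mathcal{B}}(X,1)$ has degree exactly $3$ (its leading coefficient is the $\omega_3$-coordinate of $\omega_2^2$, which is nonzero since $\{1,\omega_2,\omega_2^2\}$ is a basis), so any nontrivial factorization of the homogeneous form would induce one of $I_{\mathcal{B}}(X,1)$ in $K[X]$, contradicting its irreducibility. The main difficulty lies in exhibiting a root of $I_{\mathcal{B}}(X,1)$ in $L$, which is exactly what the change of basis to the power basis $\mathcal{B}''$ achieves.
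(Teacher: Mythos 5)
Your proof is correct and takes essentially the same route as the paper: pass to the power basis $\{1,\omega_2,\omega_2^2\}$, where the index form is a shifted copy of the minimal polynomial of $\omega_2$ (exactly the paper's Example), and transfer back to $\mathcal{B}$ via the $\gl_2(K)$-equivalence of index forms under change of basis — your law $I_{\mathcal{B}}(X,1)=w^2\,I_{\mathcal{B}''}\lpar{\tfrac{X+v}{w},1}$ is that equivalence written out for the matrix with rows $(1,0)$ and $(v,w)$. The only difference is that you make explicit the details the paper leaves implicit (the root $w(\omega_2+p)-v$ and the nonvanishing of the leading coefficient), which is a faithful elaboration rather than a different argument.
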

\begin{proof}
    Since the characteristic of $K$ is different from 3, $L/K$ is a separable extension. Take a basis $\mathcal{B}'$ of $L$ over $K$. Since $\mathcal{B}$ is also an $K$-basis for $L$, we have that $I_{\mathcal{B}}$ and $I_{\mathcal{B}'}$ are $\gl_2(K)$-equivalent via the change of basis matrix. In particular, the dehomogenization of the index form of $B$ over $A$ is also a defining polynomial for $L/K$ and, so, is irreducible.
\end{proof}

This proposition is a kind of reciprocal of the following example, which inspires it: 

\begin{example}
    Let $K$ be any field and let $L$ be a cubic separable extension of $K$ with a monic defining polynomial $f = X^3 + aX^2 + bX + c$. Consider the $K$-basis $\mathcal{B} = \{1,\theta,\theta^2\}$ of $L$, where $\theta$ is a root of $f$ in $L$. Then, one can check that $I_{\mathcal{B}}(X + aY,Y)$ is the homogenization of $f$.
\end{example}

The two  results above show that the (class of the) index form is an invariant of the isomorphism class of the free rank 3 $A$-algebras. 

\begin{definition}{\cite[p.104]{Delone2009TheTO}}
    An $A$-basis of $B$ of the form $\{1, \omega_2, \omega_3\}$ is a \textit{normal basis} if $\omega_2\omega_3$ is an element of $A$.
\end{definition}
Note that we can add elements of $A$ to $\omega_2$ and $\omega_3$ without altering $I_{\mathcal{B}}$. Thus, we can always assume the existence of a normal basis, and from now on $\mathcal{B}$ will be a normal basis. We shall denote by $a, b,c,d,l,m,n$ the elements in $A$ such that 
$$
\begin{cases}
    \omega_2\omega_3 = n,\\
    \omega_2^2 = m - b\omega_2 + a\omega_3,\\
    \omega_3^2 = l - d\omega_2 + c\omega_3.
\end{cases}
$$
With this notation, observe that $f_{B/A}(\omega_2) = a(1\wedge\omega_2\wedge\omega_3)$, so the coefficient of $X^3$ in $I_{\mathcal{B}}$ is $a$. Similarly, we can determine the other coefficients by analysing the images of $\omega_3$ and $\omega_2\pm\omega_3$ by $f_{B/A}$, thus obtaining that
\begin{equation}
    I_{\mathcal{B}}(X,Y) = aX^3 + bX^2Y + cXY^2 + dY^3.
\end{equation}
The coefficients $a,b,c,d$ uniquely determine $B$ up to isomorphism. This gives a correspondence between isomorphism classes of free rank 3 $A$-algebras admitting normal bases and $\gl_2(A)$-orbits on the space $\syma$. This correspondence preserves discriminants. We briefly recall the definition of  the discriminant of a form in $\syma$.

\begin{definition}
    Let $f = aX^3 + bX^2Y + cXY^2 + dY^3 \in \syma$.  The \textit{discriminant of $f$} as 
    $$
    \Delta(f) := b^2c^2 + 18abcd - 4ac^3 - 4db^3 - 27a^2d^2.
    $$
\end{definition}
\begin{remark}
    Note that $\Delta(f)$ is equal to the discriminant of $f(X,1)$ as a polynomial in one variable, which we denote by $\Delta(f(X,1))$. Recall that $f(X,1)$ has a double root if, and only if, $\Delta(f(X,1)) = 0$. Denote by $K$ the fraction field of $A$ and $\overline{K}$ an algebraic closure of $K$. Since $f$ is the homogenization of $f(X,1)$, we obtain $\Delta(f) = 0$ if, and only if, $f(X,Y) = (aX - bY)^sg(X,Y)$, for some $a, b\in \overline{K}$, $s > 1$ and $g \in \sym^{3 - s}(\overline{K}^2)$.
\end{remark}

Following  \cite{Bhargava}, we consider the \textit{twisted action}   
of $\gl_2(A)$ on the space $\sym^r(A^s)$  of homogeneous forms of degree $r$ on $s$ variables,
defined by $\gamma\star f(X) = \det(\gamma)^{-1}f(X\cdot\gamma)$.

A direct computation shows that $\Delta(\gamma\star f) = \det(\gamma)^2 f$ for any $f \in \syma$ and $\gamma \in \gl_2(A)$. 
\begin{definition}
    The \textnormal{discriminant} of a $\gl_2(A)$-orbit on $\syma$ is the class of $\Delta(f)$ in $A/A^{\times 2}$, where $f$ is any form in the orbit.
\end{definition}
Under these circumstances, we have a slight generalization of {\cite[Prop. 4.2]{GanGrossSavin}}.
\begin{proposition}
    \label{proposition: GGS bijection}
    There is a discriminant-preserving bijection 
    \begin{alignat*}{4}
        \left\{\parbox{12em}{\textnormal{Isomorphism classes of rank 3            free } A \textnormal{-algebras}}\right\}&\longleftrightarrow&&\left\{\gl_2(A)-\textnormal{orbits on }\syma\right\}\\
        [B]&\longleftrightarrow&&\gl_2(A)\star I_{\mathcal{B}}(X,Y)
    \end{alignat*}
\end{proposition}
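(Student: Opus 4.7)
The plan is to verify the correspondence in three logical steps -- well-definedness, surjectivity, injectivity -- and then compare the discriminants.

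First I would show the forward assignment $[B]\mapsto \gl_2(A)\star I_{\mathcal B}$ does not depend on the choice of normal basis. Any two normal bases $\{1,\omega_2,\omega_3\}$ and $\{1,\omega_2',\omega_3'\}$ differ by a translation of $\omega_2,\omega_3$ by elements of $A$ (which leaves $I_{\mathcal B}$ invariant, as remarked before the proposition) followed by a matrix $M\in\gl_2(A)$ acting on $(\omega_2,\omega_3)$. Expanding the index map in the new basis and using the identity $1\wedge \omega_2'\wedge\omega_3'=\det(M)(1\wedge\omega_2\wedge\omega_3)$ should produce the transformation law $I_{\mathcal B'}=M\star I_{\mathcal B}$, exactly matching the twisted action. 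An $A$-algebra isomorphism $\phi\colon B\to B'$ fixes $A$ pointwise and therefore carries any normal basis of $B$ to a normal basis of $B'$ with identical structure constants, so isomorphic algebras land in the same orbit.

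For surjectivity I would construct an algebra from a form $f=aX^3+bX^2Y+cXY^2+dY^3$ by declaring $B=A\oplus A\omega_2\oplus A\omega_3$ and imposing the multiplication table
\[
\omega_2\omega_3=n,\qquad \omega_2^2=m-b\omega_2+a\omega_3,\qquad \omega_3^2=l-d\omega_2+c\omega_3,
\]
with $l,m,n\in A$ to be determined. Associativity $(\omega_2^2)\omega_3=\omega_2(\omega_2\omega_3)$ forces $n=-ad$, $m=-ac$, $l=-bd$; checking the companion identity $(\omega_2\omega_3)\omega_3=\omega_2(\omega_3^2)$ imposes no further conditions. Commutativity is built in, so this defines a commutative associative free rank 3 $A$-algebra, and a direct expansion of $1\wedge(X\omega_2+Y\omega_3)\wedge(X\omega_2+Y\omega_3)^2$ should confirm that its index form in the basis $\{1,\omega_2,\omega_3\}$ is precisely $f$. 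Because the structure constants are uniquely determined by $(a,b,c,d)$, two forms in the same orbit come from isomorphic algebras after rewriting one basis in terms of the other via the well-definedness calculation above, which gives injectivity.

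For the discriminant comparison I would compute $\disc(1,\omega_2,\omega_3)=\det(\tr(\omega_i\omega_j))$ using the multiplication table with the forced values of $l,m,n$; expanding the determinant should reproduce $b^2c^2+18abcd-4ac^3-4b^3d-27a^2d^2=\Delta(f)$. The main obstacle is the associativity verification in the surjectivity step: one must confirm that all of the remaining triple products are consistent with the three relations above, so that the constructed $B$ is genuinely an $A$-algebra rather than an $A$-module carrying a partial multiplication. Once that is in hand, the rest of the argument is essentially bookkeeping, and the proof mirrors the one in \cite{GanGrossSavin} carried out over a general Dedekind domain instead of $\ZZ$.
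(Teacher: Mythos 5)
Your outline is correct and is essentially the argument the paper relies on: it reconstructs the Delone--Faddeev/Gan--Gross--Savin correspondence (normal-basis structure constants pinned down by the form, associativity forcing $n=-ad$, $m=-ac$, $l=-bd$, and the twisted-action transformation law $I_{\mathcal B'}=M\star I_{\mathcal B}$ under change of normal basis), which is exactly the proof of \cite[Prop.~4.2]{GanGrossSavin} with $\ZZ$ replaced by $A$ that the paper's one-line proof cites. The only (harmless) bookkeeping slip is that $(\omega_2^2)\omega_3=\omega_2(\omega_2\omega_3)$ alone gives $n=-ad$, $m=-ac$ and $al=bn$, while $l=-bd$ comes from the companion identity $(\omega_3^2)\omega_2=\omega_3(\omega_2\omega_3)$; the two together yield precisely the values you state.
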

\begin{proof}
    The result follows by replacing $\ZZ$ by $A$ in the proof of \cite[Prop. 4.2]{GanGrossSavin}.
\end{proof}

From section \ref{section: jacobian of the index form} onwards, we will study monogenity by means of elliptic curves. The methods we will use give results about the existence of \textit{rational} solutions to the index form equation. Thus, we propose the following definition.

\begin{definition}
    Let $B$ be a rank $n$ $A$-algebra and $\mathcal{B}$ an $A$-basis of $B$. We define $B$ to be \textit{quasi-monogenic} (over $A$) if $I_{\mathcal{B}}$ represents a unit over the field of fractions of $A$.
\end{definition}

In practice, we will see in section (\ref{section: computations}) that most of the fields we will analyze are not quasi-monogenic and, hence, not monogenic. 

\section{The jacobian of the index form}
\label{section: jacobian of the index form}
In this section, we shall describe the connection between monogenity of free cubic algebras and elliptic curves. The results of this section are developed in \cite{Bhargava} and \cite{Bhargava_2019}.

From now on, we assume that $A$ is a Dedekind domain with characteristic not 2 or 3 and that the discriminant of $B$ over $A$ is not 0 (equivalently, the trace form of $B$ over $A$ is non-degenerate). We denote by $K$ the field of fractions of $A$ and $L$ the field of fractions of $B$, so $L/K$ is a separable cubic field extension. 
\begin{lemma} Let $C_{\mathcal{B}}$ the plane projective curve given by  $I_{\mathcal{B}}(X,Y) = Z^3$ on $\pp^2(K)$, where $\mathcal{B}$ is any $A$-basis of $B$.  Then
\begin{itemize}
    \item[a)]  $C_{\mathcal{B}}$ is smooth. 
\item[b)] For any other $A$-basis $\mathcal{B}'$ of $B$, the curves $C_{\mathcal{B}}$ and $C_{\mathcal{B}'}$ are isomorphic over $K$.
\end{itemize}
\end{lemma}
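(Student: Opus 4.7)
The plan is to dispatch both parts by direct computation: part (a) via the Jacobian criterion and the link between singularities and the discriminant of $I_\mathcal{B}$, and part (b) via the projective change of coordinates induced by the change-of-basis matrix in $\gl_2(A)$.

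For (a), consider $F(X, Y, Z) = I_\mathcal{B}(X, Y) - Z^3$ and its partial derivatives. Since $\text{char}\, K \ne 3$, the vanishing of $\partial F / \partial Z = -3Z^2$ forces $Z = 0$ at any singular point. Such a point would additionally satisfy $I_\mathcal{B}(X, Y) = 0$ together with $\partial_X I_\mathcal{B} = \partial_Y I_\mathcal{B} = 0$ at $(X,Y)$, making $(X : Y) \in \pp^1(\overline K)$ a multiple root of the binary cubic $I_\mathcal{B}$. But $\Delta(I_\mathcal{B})$ coincides with $\disc(B/A)$ modulo squares by Proposition \ref{proposition: GGS bijection}, and the standing hypothesis $\disc(B/A) \ne 0$ forces $I_\mathcal{B}$ to have simple roots. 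This contradiction proves smoothness.

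For (b), write $\mathcal{B} = \{1, \omega_2, \omega_3\}$ and $\mathcal{B}' = \{1, \omega_2', \omega_3'\}$, and let $\gamma \in \gl_2(A)$ be the matrix expressing $(\omega_2', \omega_3')$ in terms of $(\omega_2, \omega_3)$ modulo $A$. The twisted-action formula gives $I_{\mathcal{B}'}(X, Y) = \det(\gamma)^{-1} I_\mathcal{B}((X, Y)\gamma)$, so the $K$-linear projective map $\psi \colon (X : Y : Z) \mapsto ((X, Y)\gamma : Z)$ sends $C_{\mathcal{B}'}$ isomorphically onto the plane cubic $I_\mathcal{B}(X, Y) = \det(\gamma) Z^3$.

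The main obstacle is to identify this twisted curve with $C_\mathcal{B}$ itself over $K$: the obvious substitution $Z \mapsto \det(\gamma)^{1/3} Z$ requires a cube root of $\det(\gamma)$ in $K$, which is not available in general. The obstruction is trivial when $A^\times \subseteq (K^\times)^3$, in particular when $A = \ZZ$ --- the principal case of interest in the paper, where $\det(\gamma) = \pm 1$ is visibly a cube in $\QQ$. In the general Dedekind setting I would either impose this condition or replace the naive projective-linear argument with a more intrinsic one: realize both $C_\mathcal{B}$ and $C_{\mathcal{B}'}$ as the same $K$-scheme attached functorially to $B$ via the index map $f_{B/A}$ together with a chosen trivialization of $\bigwedge^3 B$, so that the dependence on $\mathcal{B}$ is absorbed into an automorphism of the ambient $\pp^2$. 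This functorial identification is where I anticipate the genuine technical difficulty.
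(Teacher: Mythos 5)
Your part (a) is correct and follows essentially the paper's route: $\operatorname{char}K\neq 3$ forces $Z=0$ at a singular point, and a repeated root of the binary cubic is excluded because $\Delta(I_{\mathcal{B}})$ equals $\disc(B/A)\neq 0$ up to squares. The paper treats the possible singularity at $(1:0:0)$ by a separate linear-independence argument, but your uniform use of the binary-form discriminant (which detects repeated linear factors also at the factor $Y$, i.e.\ ``at infinity'') covers that case, so no gap there.

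For part (b), the determinant obstruction you flag is genuine, and in fact the paper's own proof glosses over it: it uses exactly the naive map $(x,y)\mapsto (x,y)\cdot\gamma^{-1}$, which carries $C_{\mathcal{B}'}$ onto $\{I_{\mathcal{B}}=\det(\gamma)^{-1}Z^3\}$ rather than onto $C_{\mathcal{B}}$. Over $\ZZ$ this is harmless, as you say: $\det(\gamma)=\pm 1$ and $-1=(-1)^3$, so one composes with $Z\mapsto cZ$ where $c^3=\det(\gamma)$. But your fallback plan --- an ``intrinsic/functorial'' identification for a general Dedekind domain --- cannot succeed, because the obstruction is not an artifact of the linear-algebra argument: the statement itself fails in that generality. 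Indeed, the syzygy construction of section \ref{section: jacobian of the index form} shows that the Jacobian of $C_{\mathcal{B}}$ is $E^{\Delta(I_{\mathcal{B}})}$, while $\Delta(I_{\mathcal{B}'})=\det(\gamma)^2\Delta(I_{\mathcal{B}})$; since the curves $E^{D}$ all have $j=0$, $E^{D}$ and $E^{uD}$ are $K$-isomorphic only when $u\in (K^{\times})^6$, i.e.\ here only when $\det(\gamma)\in\pm(K^{\times})^{3}$. So if $A$ has a unit $u$ with $\pm u\notin (K^{\times})^{3}$ (e.g.\ a fundamental unit of a real quadratic ring of integers), the basis $\mathcal{B}'=\{1,u\omega_2,\omega_3\}$ gives $C_{\mathcal{B}'}$ with a Jacobian not $K$-isomorphic to that of $C_{\mathcal{B}}$, hence the curves are not $K$-isomorphic; choosing a trivialization of $\bigwedge^3 B$ merely relocates the same unit ambiguity. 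The correct resolution is the one you mention only in passing: add the hypothesis $A^{\times}\subseteq\pm(K^{\times})^{3}$ (satisfied by $A=\ZZ$, the case on which the paper's later computations rely), or restrict to changes of basis with $\det(\gamma)=\pm1$; under that hypothesis your explicit map composed with the rescaling of $Z$ completes the proof, and this is also the patch the paper's own argument needs.
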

\begin{proof}
  \begin{itemize}
      \item[a)]
    Let $g(X,Y,Z) = I_{\mathcal{B}}(X,Y) - Z^3$ and suppose there is a singular point $P = (x:y:z)$ on $C_{\mathcal{B}}$. The partial derivatives of $g$ will vanish at $P$, so $z = 0$. 
    
    Suppose $y = 0$. Then we have the equality $I_{\mathcal{B}}(1,0) = 0$. By construction, this implies that $1,\omega_2,\omega_2^2$ are linearly dependent, so $K[\omega_2] \subsetneq L$. Since $3 = [L:K] = [L:K[\omega_2]][K[\omega_2]:K]$, we must have $[K[\omega_2] : K] = 1$, i.e. $\omega_2 \in K$. However, this implies that there exist $a, b \in A$ such that $a = b\omega_2$, contradicting the fact that $\{1,\omega_2,\omega_3\}$ is an $A$-basis of $B$ and, in particular, linearly independent. Thus, assume $y \neq 0$. In this case, $P$ is of the form $(x:1:0)$. The condition that $P$ is a singular point implies 
    $$
    I_{\mathcal{B}}(x,1) = \frac{\partial}{\partial x} I_{\mathcal{B}}(x,1) = 0,
    $$
    i.e. $x$ is a double root of $I(x,1)$. However, 
    $$
    \Delta(I_{\mathcal{B}}(x,1)) = \Delta(I_{\mathcal{B}}) \overset{\ref{proposition: GGS bijection}}{=} \disc(L/K) \neq 0,
    $$ 
    so $I_{\mathcal{B}}(X,1)$ cannot have a double root.
    \item[b)] If $\mathcal{B}'$ is another $A$-basis of $B$, then $I_{\mathcal{B}} = \gamma\star I_{\mathcal{B}'}$ for some $\gamma \in \gl_2(A)$. Thus, the change of variables given in affine coordinates by 
    $$
    (x, y)\mapsto (x,y)\cdot\gamma^{-1}:C_{\mathcal{B}'} \to C_{\mathcal{B}}
    $$ 
    is defined over $K$. Its inverse is the map 
    $$
    (x, y)\mapsto (x,y)\cdot\gamma:C_{\mathcal{B}} \to C_{\mathcal{B}'},
    $$ 
    which is also defined over $K$. \qedhere
\end{itemize}
\end{proof}

\begin{definition}
    Let $E$ be an elliptic curve defined over $K$. A \textit{covering of $E$} is a pair $(C,\pi)$, where $C$ is a smooth projective curve over the separable closure $\overline{K}$ and $\pi : C(\overline{K})\to E(\overline{K})$ is a surjective morphism. If $C$ and $\pi$ are defined over $K$, we say $(C,\pi)$ is \textit{defined over $K$}. A \textit{morphism of coverings} $(C,\pi)$ and $(C',\pi')$ of $E$ is a morphism $f: C \to C'$ over $\overline{K}$ such that $\pi'\circ f = \pi$. Given an isogeny  $\phi : E' \to E$ from another elliptic curve $E'$ to $E$,  a \textit{$\phi$-covering} is a covering $(C,\pi)$ of $E$ which is $\overline{K}$-isomorphic to $(E',\phi)$ as an $E$-covering.
\end{definition}

We shall see that the curve $C_{\mathcal{B}}$ introduced above has the structure of a $\phi$-covering for a certain isogeny $\phi$ to be defined. For any $D \in K\setminus\{0\}$, we shall denote by $E^D$ the elliptic curve $Y^2 = 4X^3 + D$ and, for any $F \in \symf$, $C_K$ denotes the projective curve $F(X,Y) = Z^3$.

Let $F \in \symf$. Let $h_F$ and $g_F$ denote the hessian of $f$ and the jacobian derivative of $f$ and $h$, respectively. Explicitly, denoting the partial derivatives of any form $p \in \symf$ with respect to $x$ and $y$ as $p_x$ and $p_y$, respectively,  
\begin{align*}
    h_f &= \frac{1}{4}(f_{xx}f_{yy} - f_{xy}^2),\\
    g_f &= f_x(h_f)_y - f_y(h_f)_x.
\end{align*}
Both are degree 1 covariants by the twisted action of $\gl_2(K)$ on $\symf$. They are related by the syzygy
$$
g_f^2 + 27\Delta(f)f^2 + 4h_f^3 = 0.
$$
This relation implies that 
$$
(x,y,z)\mapsto \lpar{\frac{-h_f(X,Y)}{Z^2}, \frac{g_f(X,Y)}{Z^3}}
$$ 
defines a rational map $\pi_f : C_{f}\to E^{-27D}$, where $D = \Delta(f)$. Thus, $(C_{f}, \pi_f)$ defines a covering of $E^{-27D}$.

Since $D \neq 0$, every $K$-reducible homogeneous binary cubic form with discriminant $D$ is $\spl_2(K)$-equivalent to $f_0 := X^2Y - (D/4)Y^3$. Indeed, such a form $f_1$ has a factor $aX + bY$ for some $a, b \in K$. A suitable linear change of variables given by a matrix in $\spl_2(K)$ changes this factor into $Y$, so $f_1$ is $\spl_2(K)$-equivalent to $Yf_2(X,Y)$, for some binary quadratic homogeneous form $f_2$. By completing squares, scaling coefficients and imposing that the discriminant of $f_1$ is $D$, we have that $f_1$ is $\spl_2(K)$-equivalent to $f_0$. In particular, any form in $\symf$ is $\spl_2(\overline{K})$-equivalent to $f_0$. This induces a $\overline{K}$-isomorphism of coverings $C_f \to C_{f_0}$, and note that $\phi: (x,y,z)\mapsto (y/2,z,x)$ defines an isomorphism from $E^D$ to $C_{f_0}$. The result of these constructions is that $(C_f, \pi_f)$ is a $\phi_D$-covering, where $\phi_D = \pi_{f_0}\circ \phi : E^D \to E^{-27D}$. Under these circumstances, we can give $C_f$ the structure of a homogeneous space over $E^D$ (see \cite[\S X.3]{Silverman} for a survey on homogeneous spaces); let $\psi:C_f \to E^D$ be a $\overline{K}$-isomorphism of coverings. Then, the map 
$$
(P,Q)\longmapsto \psi^{-1}(P + \psi(Q)):E^D\times C \to C
$$
is independent of the choice of $\psi$ and defines a simply transitive action of $E^D$ on $C$. In particular, $E^D$ is the jacobian of $C_f$ (see \cite[\S X, Thm. 3.8]{Silverman}). Explicitly,
\begin{equation}
    \label{equation: isogeny}
    \phi_D:(x,y) \mapsto \lpar{\frac{x^3 + D}{x^2}, \frac{y(x^3 - 2D)}{x^3}}.
\end{equation}
Note that $\phi_D$ is a 3-isogeny with kernel $\langle(0, \pm \sqrt{D})\rangle$. Its dual isogeny is 
$$
\hat{\phi}_D : (x,y) \mapsto \lpar{\frac{x^3 - 27D}{9x^2}, \frac{y(x^3 + 54D)}{27x^3}},
$$
whose kernel is $\langle (0, \sqrt{27D})\rangle$. Observe that this kernel is the image of $(\zeta\sqrt[3]{D}, \pm \sqrt{-3D})$ by $\phi_D$, where $\zeta$ ranges from the cubic roots of the unit. 

By the twisting principle, the first Galois cohomology group $H^1(K, E^D[\phi_D])$ parametrizes isomorphism classes of $\phi_D$-coverings (see \cite[\S 2]{CREUTZ2012673}). Explicitly, the class corresponding to a $\phi_D$-covering $(C, \pi)$ is $[\sigma \mapsto \psi^{\sigma}\psi^{-1}]$, where $\psi$ is any $\overline{K}$-isomorphism of coverings between $C$ and $E^D$. In the next lemma, $\sim$ denotes the equivalence relation defined as follows: any two $\phi$-coverings $(C,\pi)$ and $(C',\pi')$ are equivalent if $(C,\pi)$ and $(C',\pi')$ are $K$-isomorphic as $\phi$-coverings.

\begin{lemma}
    Let $B$ be a free cubic $A$-algebra. Then, identifying the set of isomorphism classes of $\phi_D$-coverings with $H^1(K,E^D[\phi_D])$, we have
    $$
    \{(C_{\mathcal{B}}, \pi_{I_{\mathcal{B}}}) | \mathcal{B} \textnormal{ is an }A\textnormal{-basis of }B\}/\sim \ = \{\pm\alpha_B\},
    $$
    for some nonzero $\alpha_B \in H^1(K,E^D[\phi_D])$.
\end{lemma}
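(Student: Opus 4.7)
The plan is to establish two things: that different $A$-bases of $B$ yield $\phi_D$-coverings whose classes in $H^1(K, E^D[\phi_D])$ all lie in a two-element set $\{\pm \alpha_B\}$, and that $\alpha_B$ itself is nonzero.

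First I would reduce to normal bases: since adding elements of $A$ to the basis elements does not alter the index form, we may restrict attention to normal bases, any two of which $\mathcal{B}, \mathcal{B}'$ are related by a matrix $\gamma \in \gl_2(A)$. The previous lemma already supplies a $K$-isomorphism $C_\mathcal{B} \to C_{\mathcal{B}'}$; the remaining question is how this isomorphism interacts with the projections $\pi_{I_\mathcal{B}}, \pi_{I_{\mathcal{B}'}}$ to $E^{-27D}$. Since $\pi_f(X,Y,Z) = (-h_f(X,Y)/Z^2, g_f(X,Y)/Z^3)$ depends only on the classical covariants $h_f$ (hessian) and $g_f$ (jacobian derivative), the answer is dictated by the covariance of these two forms under the twisted $\gl_2$-action. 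The hessian $h_f$ transforms without sign, but the syzygy $g_f^2 = -4h_f^3 - 27\Delta(f)f^2$ determines $g_f$ only up to sign: explicitly $g_{\gamma\star f}(X, Y) = \epsilon(\gamma)\, g_f((X, Y)\gamma)$ for some $\epsilon(\gamma)\in\{\pm1\}$. Thus the change-of-basis isomorphism intertwines $\pi_{I_\mathcal{B}}$ with either $\pi_{I_{\mathcal{B}'}}$ (when $\epsilon(\gamma)=+1$) or $\iota\circ\pi_{I_{\mathcal{B}'}}$, where $\iota:(x,y)\mapsto(x,-y)$ denotes inversion on $E^{-27D}$.

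To translate this into cohomology: composition with $\iota$ on the target is absorbed by precomposing a trivializing $\overline{K}$-isomorphism $\psi$ with negation on $E^D$; the defining cocycle $\sigma\mapsto \psi^\sigma\psi^{-1}\in E^D[\phi_D]$ is then negated, so the corresponding class in $H^1(K, E^D[\phi_D])$ is replaced by its additive inverse. Hence all bases of $B$ yield classes in $\{\pm\alpha_B\}$ for a single $\alpha_B$. For the nonvanishing, suppose for contradiction that $\alpha_B = 0$: then $(C_\mathcal{B},\pi_{I_\mathcal{B}})$ is the trivial $\phi_D$-cover, $K$-isomorphic to $(E^D, \phi_D)$, and in particular the scheme-theoretic fiber $\pi_{I_\mathcal{B}}^{-1}(O)$ over the identity would be $K$-isomorphic to $\ker(\phi_D)\subset E^D$. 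On one hand, that fiber is cut out on $C_\mathcal{B}$ by $Z = 0$, so it is the projective vanishing locus of $I_\mathcal{B}(X, Y)$, which by Proposition \ref{proposition: irreducibility of index form} is irreducible over $K$; the fiber is a single closed point with residue field $L$, hence connected. On the other hand, $\ker(\phi_D)$ contains the $K$-rational identity $O$ as an isolated component and is therefore disconnected as a $K$-scheme. These are incompatible, forcing $\alpha_B \neq 0$.

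The main obstacle is pinning down the sign $\epsilon(\gamma)$ in the transformation law of $g_f$ and matching it cleanly with the $\{\pm 1\}$-action of $\aut(E^D[\phi_D])$ on $H^1$; this is really bookkeeping with the classical $\gl_2$-covariance of the invariants of a binary cubic rather than a substantive difficulty.
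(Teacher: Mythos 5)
Your overall route for the containment is the same as the paper's (basis change $\Rightarrow$ twisted-equivalent index forms $\Rightarrow$ covering maps agree up to inversion on $E^{-27D}$ $\Rightarrow$ classes differ at most by sign), but the step you defer as ``bookkeeping'' is exactly where your mechanism is wrong, and this matters for the stated equality. You attribute the sign to an ambiguity $g_{\gamma\star f}(X,Y)=\epsilon(\gamma)\,g_f((X,Y)\gamma)$ with $\epsilon(\gamma)\in\{\pm1\}$ coming from the syzygy. In fact, under the \emph{twisted} action both $h_f$ and $g_f$ are honest degree-one covariants (as the paper records): $h_f$ and $g_f$ are respectively quadratic and cubic in the coefficients of $f$, so the factors $\det(\gamma)^{\mp 2}$ and $\det(\gamma)^{\mp 3}$ cancel against the weights and one has $g_{\gamma\star f}(X,Y)=g_f((X,Y)\gamma)$ exactly; your $\epsilon(\gamma)$ is always $+1$. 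The true source of the sign is the $Z$-coordinate, which you never discuss: to map $C_{\mathcal{B}_0}:I_{\mathcal{B}_0}=Z^3$ to $C_{\mathcal{B}}:I_{\mathcal{B}}=Z^3$ via $(X,Y)\mapsto (X,Y)\gamma^{-1}$ one must rescale $Z$ by a cube root of $\det(\gamma)^{-1}$, i.e.\ by $\det(\gamma)=\pm1$, and this rescaling hits the odd power $Z^3$ in the $y$-coordinate $g_f/Z^3$ of $\pi_f$, giving $\pi_{I_{\mathcal{B}}}\circ\psi=\pm\pi_{I_{\mathcal{B}_0}}$ with sign $\det(\gamma)$. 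Followed literally, your computation would output the sign $+1$ for every basis change and hence only the class $\alpha_B$.

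This feeds into the second, genuinely missing point: the lemma claims the quotient set \emph{equals} $\{\pm\alpha_B\}$, and since $H^1(K,E^D[\phi_D])$ is $3$-torsion and $\alpha_B\neq 0$, one has $-\alpha_B\neq\alpha_B$, so you must exhibit bases realizing \emph{both} signs; your argument only gives the inclusion into $\{\pm\alpha_B\}$. The paper settles this by pointing to a concrete determinant $-1$ basis change (e.g.\ $\omega_3\mapsto-\omega_3$), for which $I_{\mathcal{B}}(X,Y)=-I_{\mathcal{B}_0}(X,-Y)$ and the sign above is $-1$. On the positive side, your cohomological translation (absorbing inversion on the target into negating the trivializing isomorphism, which negates the cocycle) is correct, and your non-vanishing argument comparing the fiber over the identity --- the $Z=0$ locus, a single closed point with residue field $L$ by Proposition \ref{proposition: irreducibility of index form}, hence connected --- with the disconnected $\ker(\phi_D)$ is a valid alternative to the paper's terse appeal to irreducibility. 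But as it stands the sign analysis must be corrected and the realization of $-\alpha_B$ supplied before the set equality is proved.
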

\begin{proof}
    Fix an $A$-basis $\mathcal{B}_0$ of $B$ and denote by $\alpha_B$ the element in $H^1(K,E^D[\phi_D])$ corresponding to $(C_{\mathcal{B}_0},\pi_{I_{\mathcal{B}_0}})$. By \ref{proposition: GGS bijection}, for any other basis $\mathcal{B}$, we have that $I_{\mathcal{B}} = \gamma\star I_{\mathcal{B}_0}$ for some $\gamma \in \gl_2(A)$. This gives a map $\psi:C_{\mathcal{B}_0} \to C_{\mathcal{B}}$ given in affine coordinates by $(x,y)\mapsto (x,y)\cdot\gamma^{-1}$. Then, one can check that $\pi_{I_{\mathcal{B}}}\circ\psi = \pm \pi_{I_{\mathcal{B}_0}}$, so $(C_{\mathcal{B}},\pi_{I_\mathcal{B}})$ corresponds to $\pm\alpha_B$. In particular, when $I_{\mathcal{B}}(X,Y) = -I_{\mathcal{B}_0}(X,-Y)$, then $(C_{\mathcal{B}},\pi_{I_\mathcal{B}}) = -\alpha_B$, so both $\alpha_B$ and $-\alpha_{B}$ occur for some suitable choices of $A$-bases. Since $I_{\mathcal{B}}$ is irreducible by \ref{proposition: irreducibility of index form}, $\alpha_B$ is not zero.
\end{proof}

This way, $B$ determines a non-trivial $\mathbb{F}_3$-orbit in $H^1(K, E^D[\phi_D])$, where $\FF_3$ acts by repeated addition. 

Next, let $B'$ be another finite free $A$-algebra of rank 3 and discriminant $D$. Let $L'$ be its field of fractions, which is a cubic extension of $K$, and assume $L'\not\cong L$. Then, by (\ref{proposition: irreducibility of index form}), $L \cong K[X]/I_{L}(X,1)$, so $I_{L}$ is reducible over $L$ whereas $I_{L'}$ is not. Thus, $\alpha_L$ is in the kernel of $H^1(K, E^D[\phi_D]) \to H^1(L, E^D[\phi_D])$ but $\alpha_{L'}$ is not. This proves the following result:

\begin{proposition}{\cite[Lemma 15]{Bhargava}}
    If $L$ and $L'$ are non-isomorphic cubic extensions of $K$ of the same discriminant $D$, then $\alpha_L$ and $\alpha_{L'}$ are linearly independent in $H^1(K, E^D[\phi_D])$.
\end{proposition}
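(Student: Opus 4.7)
The strategy is to restrict to $L$ and exploit that $\alpha_L$ lies in the kernel of the restriction map while $\alpha_{L'}$ does not. Since $\phi_D$ has degree 3, $H^1(K, E^D[\phi_D])$ is an $\mathbb{F}_3$-vector space, so linear dependence of the two nonzero classes $\alpha_L, \alpha_{L'}$ would mean $\alpha_{L'} = c\alpha_L$ for some $c \in \mathbb{F}_3^{\times}$. I will rule this out by showing that $\alpha_L$ is killed by the restriction map $\mathrm{res}: H^1(K, E^D[\phi_D]) \to H^1(L, E^D[\phi_D])$ while $\alpha_{L'}$ is not.

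To handle $\alpha_L$, the first step is to observe that by functoriality of the construction $B \mapsto (C_{I_\mathcal{B}}, \pi_{I_\mathcal{B}})$ under base change, the restriction $\alpha_L|_L$ coincides with the class $\alpha_{L \otimes_K L}$ attached to the free rank-3 $L$-algebra $L \otimes_K L$. The multiplication map $L \otimes_K L \to L$ is surjective with an $L$-algebra section $l \mapsto l \otimes 1$, so $L \otimes_K L \cong L \oplus N$ as $L$-algebras for some rank-2 $L$-algebra $N$. Consequently the $L$-index form of $L \otimes_K L$, which is simply $I_L$ viewed over $L$, is reducible with a linear factor; that factor provides a root $\omega \in L$ of $I_L(X,1)$. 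Thus the point $(\omega : 1 : 0)$ lies in $C_{I_L}(L)$, which trivializes the $\phi_D$-covering over $L$ and gives $\alpha_L|_L = 0$.

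For $\alpha_{L'}|_L$, the same functoriality gives $\alpha_{L'}|_L = \alpha_{L' \otimes_K L}$. Since $L \not\cong L'$ and both are cubic over $K$, there is no nonzero $K$-algebra map $L' \to L$ (any such map would be an isomorphism by degree), so $L' \otimes_K L$ has no $L$-factor. Being a reduced cubic $L$-algebra, and since 3 is prime, the only remaining possibility is that $L' \otimes_K L$ is a single cubic field extension of $L$. Proposition \ref{proposition: irreducibility of index form} then ensures its index form is irreducible over $L$, and the preceding lemma, now applied to $L' \otimes_K L$ as a rank-3 $L$-algebra, yields $\alpha_{L'}|_L \neq 0$.

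Putting these together, a relation $a \alpha_L + b \alpha_{L'} = 0$ in $H^1(K, E^D[\phi_D])$ restricts to $b \cdot \alpha_{L'}|_L = 0$, forcing $b = 0$; the remaining equation $a \alpha_L = 0$ then forces $a = 0$ since $\alpha_L \neq 0$. The one point that requires genuine care is the functorial identification $\alpha_B|_L = \alpha_{B \otimes_K L}$, but this is essentially immediate: the curve $C_{I_\mathcal{B}}$ and the map $\pi_{I_\mathcal{B}}$ are defined by polynomial formulas in the coefficients of $I_\mathcal{B}$, and therefore commute with the base extension $K \hookrightarrow L$.
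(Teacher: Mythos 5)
Your overall strategy is the same as the paper's: restrict to $L$, show that $\alpha_L$ dies under $\mathrm{res}\colon H^1(K,E^D[\phi_D])\to H^1(L,E^D[\phi_D])$ while $\alpha_{L'}$ does not, and conclude linear independence over $\mathbb{F}_3$. Your handling of $\alpha_{L'}$ is sound: $L'\otimes_K L$ is a cubic field over $L$ because a $K$-embedding $L'\to L$ would be an isomorphism, its index form is $I_{L'}$ viewed over $L$, and irreducibility (Proposition \ref{proposition: irreducibility of index form}) gives $\alpha_{L'}|_L\neq 0$, exactly as in the paper (which phrases it as: $I_{L'}$ stays irreducible over $L$). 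The base-change compatibility you invoke is indeed harmless, since the index form has the same coefficients after extension of scalars.

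The gap is in the step killing $\alpha_L|_L$: from the $L$-point $(\omega:1:0)\in C_{I_L}(L)$ you conclude that ``the $\phi_D$-covering is trivialized over $L$.'' An $L$-rational point on the covering curve does \emph{not} trivialize the class in $H^1(L,E^D[\phi_D])$; it only trivializes its image in $H^1(L,E^D)$ under the Kummer sequence (\ref{equation: Kummer}), i.e.\ it places the class in the soluble part $\delta\bigl(E^{-27D}(L)/\phi_D(E^D(L))\bigr)$, which may well be nonzero. This distinction is precisely the one the whole paper runs on: every quasi-monogenic field produces a covering $C_{I_{\mathcal{B}}}$ with a rational point whose class $\alpha_B$ is nonzero, so the inference ``rational point $\Rightarrow$ trivial covering class'' is false in general. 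Your conclusion is nonetheless correct, and your computation already contains what is needed: the relevant fact is not that $C_{I_L}$ has some $L$-point, but that $I_L$ acquires a linear factor over $L$. Equivalently, the point you found lies in the fibre $\pi_{I_L}^{-1}(\mathcal{O})$ (the locus $Z=0$), and that fibre is an $E^D[\phi_D]$-torsor whose class is $\pm\alpha_L$, so a rational point \emph{there} does kill the class; alternatively, as in Section \ref{section: jacobian of the index form}, a form with a linear factor over $L$ and discriminant $D\neq 0$ is $\spl_2(L)$-equivalent to $f_0=X^2Y-(D/4)Y^3$, which yields an $L$-isomorphism of coverings $C_{I_L}\to(E^D,\phi_D)$ and hence $\alpha_L|_L=0$. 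With that repair, your argument coincides with the paper's proof.
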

Now, $H^1(K,E^D[\phi_D])$ sits in the middle of the Kummer exact sequence 
\begin{equation}
    \label{equation: Kummer}
    E^{-27D}(K) \longrightarrow H^1(K, E^D[\phi_D]) \longrightarrow H^1(K, E^D).
\end{equation}
Recall that the $H^1(K,E^D)$ is the Weil-Ch\^{a}telet group $\wc(E^D/K)$, which parametrises homogeneous spaces up to $K$-isomorphism. In terms of $\phi_D$ coverings and homogeneous spaces, the maps of the Kummer exact sequence \ref{equation: Kummer} are given by $p \mapsto [(E^D, \tau_{P}\circ\phi_D)]$ and $[(C, \pi)] \mapsto [C]$. Thus, $(C, \pi)$ is in the kernel of the corresponding map if, and only if, $[C]$ is the trivial class in the Weil-Ch\^{a}telet group $\wc(E^D/K)$, i.e. if $C$ has a rational point. To study this kernel, we may restrict ourselves to $\phi_D$-coverings and homogeneous spaces having a point everywhere locally, which can be achieved through the $\phi_D$-Selmer and Tate-Shafarevich groups. 
\begin{definition}{\cite[p. 332]{Silverman}}
    Let $\phi: E \to E'$ be an isogeny over $K$. Then, the $\phi$-Selmer group, denoted by $S^{\phi}(E/K)$, is defined by 
    $$
    S^{\phi}(E/K) = \ker\lpar{H^1(K, E[\phi])\to \prod_{v \in M_K} H^1(K_v, E)},
    $$
    where $M_K$ is a complete set of inequivalent absolute values of $K$. The Tate-Shafarevich group, denoted $\Sh(E/K)$, is defined by 
    $$
    \Sh(E/K) = \ker\lpar{H^1(K, E) \to \prod_{v \in M_K}H^1(K_v, E)}
    $$
\end{definition}
In our context, $S^{\phi}(E)$ is the group of $\phi$-coverings having a point everywhere locally and $\Sh(E/K)$ is the group of homogeneous spaces having a point everywhere locally. These groups are related by the short exact sequence
\begin{equation}
    \label{equation: Selmer-TS sequence}
    0 \longrightarrow \frac{E'(K)}{\phi(E(K))} \overset{\delta}{\longrightarrow} S^{\phi}(E/K) \longrightarrow \Sh(E/K)[\phi]\longrightarrow 0.
\end{equation}
The kernel of $H^1(K, E^D[\phi_D]) \to H^1(K, E^D)$ is exactly the kernel of $S^{\phi_D}(E/K)\to\Sh(E/K)$, so we are particularly interested in the soluble part of $S^{\phi_D}(E^D/K)$. This is the image of $E'(K)/\phi(E(K))$ in $S^{\phi_D}(E^D/K)$ by the map $\delta: P \mapsto [Q^{\sigma}-Q]$, where $Q$ is a preimage of $P$ by $\phi$.

Since $\phi_D$ is a 3-isogeny, we have a natural map $S^{\phi_D}(E^D/K)\to S^3(E^D/K)$, through which we can work with multiplication by 3 instead of $\phi_D$. However, this map is not always injective. 

\begin{proposition}{\cite[Lem. 9.1]{pdescent}}
    Let $\phi:E\to E'$ and $\psi: E' \to E$ be isogenies defined over $K$. Then, the following sequence is exact:
    $$
    \begin{tikzcd}[column sep=0.8 em]
        0\ar[r]&\frac{E'(K)[\psi]}{\phi(E(K)[\psi\phi])}\ar[r]&S^{\phi}(E/K)\ar[r]&S^{\psi\phi}(E/K)\ar[r, "\phi"]&S^{\psi}(E'/K)\ar[r]&\frac{\Sh(E'/K)[\psi]}{\phi(\Sh(E/K)[\psi\phi])}\ar[r]&0.
    \end{tikzcd}
    $$
\end{proposition}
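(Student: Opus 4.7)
My plan is to derive the six-term exact sequence by combining the long cohomology sequence of the short exact sequence of Galois modules
$$0 \to E[\phi] \to E[\psi\phi] \xrightarrow{\phi} E'[\psi] \to 0 \qquad (\ast)$$
with the defining sequences of the three Selmer groups and a small diagram chase. Exactness of $(\ast)$ holds because $\phi$ is surjective on geometric points, so any $Q \in E'[\psi]$ has a preimage $P\in E(\overline{K})$ automatically lying in $E[\psi\phi]$.

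Applying Galois cohomology to $(\ast)$ over $K$ yields the start of the desired sequence,
$$0 \to \frac{E'(K)[\psi]}{\phi E(K)[\psi\phi]} \to H^1(K, E[\phi]) \xrightarrow{\iota_*} H^1(K, E[\psi\phi]) \xrightarrow{\phi_*} H^1(K, E'[\psi]),$$
and analogous sequences over each $K_v$. The first task is to check that $\iota_*$ and $\phi_*$ respect Selmer conditions: the inclusions $E[\phi] \hookrightarrow E[\psi\phi] \hookrightarrow E$ and the factorization $E[\psi\phi] \xrightarrow{\phi} E'[\psi] \hookrightarrow E'$ through $\phi:E\to E'$ make the induced diagrams with the local $H^1(K_v,E)$ and $H^1(K_v,E')$ commute. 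The connecting map $E'(K)[\psi]/\phi E(K)[\psi\phi] \to H^1(K,E[\phi])$ lands in $S^\phi(E/K)$ because its classes are represented by cocycles $\sigma\mapsto P^\sigma - P$ for some $P\in E(\overline{K})$, hence are trivialized in every $H^1(K_v, E)$. Exactness at $S^\phi$ is then inherited from $(\ast)$, and exactness at $S^{\psi\phi}$ is a one-line chase: any lift of $\beta\in S^{\psi\phi}$ with $\phi_*\beta = 0$ to $\alpha\in H^1(K,E[\phi])$ has the same image in each $H^1(K_v,E)$ as $\beta$, so $\alpha \in S^\phi$.

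For the final map I would take the composition
$$S^\psi(E'/K) \twoheadrightarrow \Sh(E'/K)[\psi] \twoheadrightarrow \frac{\Sh(E'/K)[\psi]}{\phi \Sh(E/K)[\psi\phi]}$$
coming from \ref{equation: Selmer-TS sequence}; the inclusion $\phi\Sh(E/K)[\psi\phi] \subseteq \Sh(E'/K)[\psi]$ follows from the factorization of $\psi\phi$ on $\Sh(E/K)$ through $\phi$ and $\psi$, and surjectivity is automatic.

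The main obstacle is exactness at $S^\psi(E'/K)$. The inclusion $\im(\phi_*)\subseteq\ker$ is formal from naturality of the Selmer--Shafarevich--Tate sequence. For the reverse, given $\gamma\in S^\psi$ whose image in $\Sh(E'/K)[\psi]$ equals $\phi\omega$ for some $\omega\in\Sh(E/K)[\psi\phi]$, I would lift $\omega$ to some $\beta\in S^{\psi\phi}$ via \ref{equation: Selmer-TS sequence} and reduce to the case where $\gamma - \phi_*\beta$ lies in the kernel of $S^\psi\to\Sh(E'/K)$, i.e. in the soluble part $E(K)/\psi E'(K)$. Here I would construct a preimage under $\phi_*$ by hand: for $Q\in E(K)$, pick $R\in E(\overline{K})$ with $\psi\phi R = Q$, and observe that $[\sigma\mapsto R^\sigma - R]$ takes values in $E[\psi\phi]$, defines a class in $S^{\psi\phi}$ (trivialized locally by $R$), and is sent by $\phi_*$ to $\delta_\psi(Q)$. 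This explicit construction plays the role of the snake-lemma connecting morphism and leans crucially on the characterization of Selmer cocycles as those split by a geometric point of $E$.
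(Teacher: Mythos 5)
Your proof is correct: the short exact sequence $0\to E[\phi]\to E[\psi\phi]\xrightarrow{\phi}E'[\psi]\to 0$ of Galois modules, the verification that the induced maps and the connecting homomorphism respect the local conditions, and the explicit preimage $[\sigma\mapsto R^{\sigma}-R]\in S^{\psi\phi}(E/K)$ of $\delta_{\psi}(Q)$ used for exactness at $S^{\psi}(E'/K)$ are precisely the ingredients of the standard argument. The paper gives no proof of this proposition---it is quoted directly from the cited reference (Lemma 9.1 there)---and your reconstruction is essentially the proof found in that source.
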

\begin{corollary}
    The map $S^{\phi_D}(E^D/K)\to S^3(E^D/K)$ is injective provided $-27D$ is not a square. Otherwise, its kernel is either trivial or has order 3. 
\end{corollary}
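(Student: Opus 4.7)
The plan is to apply the preceding proposition with $E = E^D$, $E' = E^{-27D}$, $\phi = \phi_D$, and $\psi = \hat{\phi}_D$. Since $\phi_D$ is a 3-isogeny whose dual is $\hat{\phi}_D$, the composition $\psi\circ\phi$ equals $[3]$ on $E^D$, so $S^{\psi\phi}(E^D/K) = S^3(E^D/K)$. The leftmost segment of the exact sequence in that proposition then identifies
$$
\ker\!\lpar{S^{\phi_D}(E^D/K)\longrightarrow S^3(E^D/K)} \;\cong\; \frac{E^{-27D}(K)[\hat{\phi}_D]}{\phi_D(E^D(K)[3])}.
$$

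Next, I would determine the $K$-rational points of $\ker\hat{\phi}_D$. As recalled in the excerpt, $\ker\hat\phi_D$ has order $3$ and, over $\overline{K}$, consists of $O$ together with the two points of $E^{-27D}\colon Y^2 = 4X^3 - 27D$ whose $X$-coordinate is $0$, namely $(0,\pm\sqrt{-27D})$. Hence the non-identity points of $\ker\hat\phi_D$ are $K$-rational if and only if $-27D$ is a square in $K$.

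If $-27D$ is not a square in $K$, then $E^{-27D}(K)[\hat{\phi}_D]=\{O\}$, the numerator above is trivial, and the Selmer map is injective. If $-27D$ is a square in $K$, then $E^{-27D}(K)[\hat{\phi}_D]$ is cyclic of order $3$; since $\hat\phi_D\circ\phi_D=[3]$ kills the $3$-torsion of $E^D$, the image $\phi_D(E^D(K)[3])$ is a subgroup of this group of prime order $3$, so it has order $1$ or $3$. Consequently the quotient, and therefore the kernel of the Selmer map, has order $3$ or $1$.

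The main (and essentially only) subtlety is recognising that $\hat{\phi}_D\circ\phi_D=[3]$ is the observation that makes the map $S^{\phi_D}(E^D/K)\to S^3(E^D/K)$ fit into the exact sequence of the previous proposition with $\psi=\hat\phi_D$. Once this is spotted, the rest of the argument is a direct rationality check on the two non-trivial points of $\ker\hat\phi_D$, with no genuine computational obstacle.
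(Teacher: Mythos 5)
Your proposal is correct and is essentially the argument the paper intends: apply the cited exact sequence with $\psi = \hat{\phi}_D$, so that $\psi\phi = [3]$ and the kernel is $E^{-27D}(K)[\hat{\phi}_D]/\phi_D(E^D(K)[3])$, then observe that the nontrivial points of $\ker\hat{\phi}_D$ are $(0,\pm\sqrt{-27D})$, which are $K$-rational exactly when $-27D$ is a square. No gaps; the rationality check and the order-$1$-or-$3$ conclusion match the paper's (unstated but implicit) reasoning.
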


\section{First bounds for the number of monogenic algebras with fixed discriminant}
\label{section: first bounds}
With these constructions, we can already get some bounds for the total number of rank 3 quasi-monogenic  $A$-algebras with discriminant $D$ in terms of the Mordell-Weil group of $E^{D}$. 

\begin{proposition}
    Let $N = \# E^D(K)/3E^D(K)$ and $c = \#\ker(S^{\phi_D}(E^D/K) \to S^3(E^D/K))$. The total number of quasimonogenic cubic $A$-algebras is $\frac{cN - 1}{2}$.
\end{proposition}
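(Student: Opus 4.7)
The plan is to translate the count of quasi-monogenic cubic $A$-algebras of discriminant $D$ into a cardinality computation in the $\phi_D$-Selmer machinery. First, by the lemma of Section~\ref{section: jacobian of the index form} attaching the two-element orbit $\{\pm\alpha_B\}\subset H^1(K,E^D[\phi_D])$ to each rank-$3$ free $A$-algebra $B$ of discriminant $D$, together with the linear-independence statement that non-isomorphic cubic extensions produce linearly independent classes, isomorphism classes of such algebras are in bijection with the nontrivial $\mathbb{F}_3^\times$-orbits of cohomology classes; each such orbit has exactly two elements since $\alpha_B\neq 0$.

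Next, I would identify the quasi-monogenic algebras as those whose orbit lies in the Kummer image. The smooth plane cubic $C_{\mathcal{B}}\colon I_{\mathcal{B}}(X,Y)=Z^3$ has a $K$-rational point if and only if $I_{\mathcal{B}}$ represents a nonzero cube, equivalently $\pm 1$ after rescaling (the irreducibility of $I_{\mathcal{B}}$ from Proposition~\ref{proposition: irreducibility of index form} forces $z\neq 0$ at any such point). This is exactly the condition that the $\phi_D$-covering class $\alpha_B$ dies in $\wc(E^D/K)$, i.e.\ lies in the image of the connecting map
\[
\delta\colon E^{-27D}(K)/\phi_D(E^D(K))\hookrightarrow H^1(K,E^D[\phi_D])
\]
of the Kummer sequence (\ref{equation: Kummer}). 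Consequently, the number of quasi-monogenic $A$-algebras equals $\bigl(\#E^{-27D}(K)/\phi_D(E^D(K))-1\bigr)/2$.

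The final and most delicate step is to identify this cardinality with $cN$. Since $\hat\phi_D\circ\phi_D=[3]_{E^D}$, the dual isogeny descends to a homomorphism
\[
\bar{\hat\phi}_D\colon E^{-27D}(K)/\phi_D(E^D(K))\longrightarrow E^D(K)/3E^D(K).
\]
A short diagram chase (using $E^{-27D}(K)[\hat\phi_D]\cap\phi_D(E^D(K))=\phi_D(E^D(K)[3])$) identifies $\ker(\bar{\hat\phi}_D)$ with $E^{-27D}(K)[\hat\phi_D]/\phi_D(E^D(K)[3])$, which is precisely the size-$c$ kernel of $S^{\phi_D}(E^D/K)\to S^3(E^D/K)$ furnished by Fisher's exact sequence in the preceding proposition. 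The main obstacle is establishing surjectivity of $\bar{\hat\phi}_D$ onto $E^D(K)/3E^D(K)$, i.e.\ that $\hat\phi_D$ meets every coset of $3E^D(K)$ in $E^D(K)$; granting this, the exact sequence $0\to\ker(\bar{\hat\phi}_D)\to E^{-27D}(K)/\phi_D(E^D(K))\to E^D(K)/3E^D(K)\to 0$ yields $\#E^{-27D}(K)/\phi_D(E^D(K))=cN$, and the count becomes $(cN-1)/2$.
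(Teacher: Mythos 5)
Your first two steps (attaching the two-element orbit $\{\pm\alpha_B\}$ to an algebra, and identifying quasi-monogenity with solubility of the covering $C_{\mathcal{B}}$, i.e.\ with membership of $\alpha_B$ in the image of the connecting map $\delta$) are fine and agree with the paper's setup; the paper's own proof then simply asserts that $\#\,E^{-27D}(K)/\phi_D(E^D(K)) = cN$ and counts pairs. The genuine gap in your proposal is exactly the step you label ``the main obstacle'' and then grant: surjectivity of $\bar{\hat\phi}_D$. Since $3E^D(K)=\hat\phi_D(\phi_D(E^D(K)))\subseteq\hat\phi_D(E^{-27D}(K))$, surjectivity of $\bar{\hat\phi}_D$ is equivalent to $\hat\phi_D(E^{-27D}(K))=E^D(K)$, and this fails in general. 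The correct statement is the exact sequence
\[
0 \longrightarrow \frac{E^{-27D}(K)[\hat\phi_D]}{\phi_D(E^D(K)[3])} \longrightarrow \frac{E^{-27D}(K)}{\phi_D(E^D(K))} \overset{\hat\phi_D}{\longrightarrow} \frac{E^D(K)}{3E^D(K)} \longrightarrow \frac{E^D(K)}{\hat\phi_D(E^{-27D}(K))} \longrightarrow 0,
\]
whose last term is the contribution of descent by the dual isogeny and need not vanish: in the standard descent-by-3-isogeny formalism both quotients $E^{-27D}(K)/\phi_D(E^D(K))$ and $E^D(K)/\hat\phi_D(E^{-27D}(K))$ contribute to the rank of $E^D$, so the cokernel of $\bar{\hat\phi}_D$ is frequently nontrivial. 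Your identification of $\ker(\bar{\hat\phi}_D)$ with the size-$c$ kernel of $S^{\phi_D}(E^D/K)\to S^3(E^D/K)$ via Fisher's sequence is correct, but what your argument actually yields is
\[
\#\frac{E^{-27D}(K)}{\phi_D(E^D(K))} \;=\; \frac{cN}{\#\bigl(E^D(K)/\hat\phi_D(E^{-27D}(K))\bigr)} \;\le\; cN,
\]
hence only the upper bound $\le (cN-1)/2$ for the number of quasi-monogenic algebras, not the claimed equality.

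To be fair, you have put your finger on precisely the point the paper's one-line proof glosses over: the equality $\#\,E^{-27D}(K)/\phi_D(E^D(K))=cN$ is not justified there either, and without an argument that $\hat\phi_D$ is surjective on $K$-points (which is not true for a general $D$) one should only expect an inequality --- which is in any case all that the subsequent corollaries use. So as a proof of the stated equality your proposal is incomplete: you must either supply the surjectivity in the situations considered (it does not hold in general) or weaken the conclusion to a bound; writing ``granting this'' does not discharge the step that carries the entire content of the cardinality computation.
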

\begin{proof}
    The size of $E^{-27D}(K)/\phi_D(E^D(K)) \setminus\{\mathcal{O}\}$ is $cN - 1$. The result follows from the fact that each pair of non-trivial points in $E^{-27D}(K)/\phi_D(E^D(K))$ determines a unique quasi-monogenic cubic $A$-algebra.
\end{proof}
In the particular case when $A = \ZZ$, define 
$$
\delta_D :=
\begin{cases}
    1&\textnormal{if }E^D(\QQ)_{\tors}\simeq \ZZ/3\ZZ, \\
    0&\textnormal{otherwise.}
\end{cases}
$$
Then, by \cite[Thm. 5.3]{knapp}, $\# E^D(K)/3E^D(K) = 3^{r + \delta_D}$. Applying the previous proposition, we obtain the following corollary.
\begin{corollary}
    Let $D$ be an integer such that $-27D$ is not a square in $\ZZ$. Then, the total number of quasi-monogenic cubic number fields of dicriminant $D$ is bounded by $\frac{3^{r + \delta_D} - 1}{2}$. 
\end{corollary}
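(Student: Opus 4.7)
The plan is to specialize the preceding proposition to $A=\ZZ$ and then substitute the two explicit inputs available over $\QQ$: the evaluation of the constant $c$ supplied by the corollary just above, and the descent formula for $\#E^D(\QQ)/3E^D(\QQ)$ cited from \cite[Thm. 5.3]{knapp}.

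First, I will observe that any cubic number field $L$ of discriminant $D$ produces a free rank 3 $\ZZ$-algebra of discriminant $D$, namely its ring of integers $\mathcal{O}_L$: freeness is automatic over the PID $\ZZ$, and the number-theoretic discriminant of $L$ coincides with the algebra discriminant of $\mathcal{O}_L$ over $\ZZ$. Moreover, $L$ is quasi-monogenic precisely when $\mathcal{O}_L$ is, since the two notions share the same index form. Hence the number of quasi-monogenic cubic number fields of discriminant $D$ is at most the number of quasi-monogenic cubic $\ZZ$-algebras of discriminant $D$, which the previous proposition counts as $(cN-1)/2$.

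Next, I evaluate the constants $c$ and $N$ in the present setting. The hypothesis that $-27D$ is not a square is exactly the injectivity condition in the corollary immediately above, so $c=1$. For $N$, the paragraph preceding the statement already recalls that $N=\#E^D(\QQ)/3E^D(\QQ)=3^{r+\delta_D}$, with $\delta_D$ as defined there. Substituting $c=1$ and $N=3^{r+\delta_D}$ into $(cN-1)/2$ yields the asserted bound $\tfrac{3^{r+\delta_D}-1}{2}$.

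No serious obstacle arises; the deduction is essentially a one-line combination of three facts already in hand. The only point of care is that the proposition supplies an exact count of $\ZZ$-algebras, whereas the corollary bounds number fields, so the resulting inequality is not generally an equality: a priori, not every quasi-monogenic cubic $\ZZ$-algebra of discriminant $D$ need arise as the ring of integers of a number field of that discriminant.
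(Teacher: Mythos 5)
Your argument is correct and is essentially the paper's own (implicit) proof: the corollary is obtained by applying the preceding proposition with $A=\ZZ$, taking $c=1$ from the injectivity of $S^{\phi_D}(E^D/\QQ)\to S^3(E^D/\QQ)$ when $-27D$ is not a square, and $N=3^{r+\delta_D}$ from the cited result of Knapp. Your closing remark that fields inject into quasi-monogenic cubic $\ZZ$-algebras (so one only gets an upper bound) is a correct and slightly more careful articulation of the step the paper leaves unstated.
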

In the case of pure cubic fields, one has that $-27D$ is a square in $\ZZ$. For example, Dedekind type I fields (to be defined below) have discriminant $-27n^2$, where $n$ is square-free. For such discriminants, one has that the torsion of $E^D$ is trivial (direct consequence of \cite[Thm. 5.3]{knapp}). Thus, we get the following bound. 
\begin{corollary}
    Suppose $D = -27n^2$ with $n$ square-free. Then, the total number of quasi-monogenic cubic number fields is bounded by $\frac{3^{r + 1}-1}{2}$.
\end{corollary}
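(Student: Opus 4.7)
The plan is to chain the preceding proposition with the two immediately preceding corollaries, plugging in the specific shape of the discriminant. First, I would apply the proposition with $A=\ZZ$, $K=\QQ$, $D=-27n^2$: this expresses the total number of quasi-monogenic rank-$3$ free $\ZZ$-algebras of discriminant $D$ as $(cN-1)/2$, where $N=\#(E^D(\QQ)/3E^D(\QQ))$ and $c=\#\ker(S^{\phi_D}(E^D/\QQ)\to S^3(E^D/\QQ))$. Since each quasi-monogenic cubic number field of discriminant $D$ contributes a distinct such algebra (namely its ring of integers, uniquely recoverable from the algebra as its fraction field), this bounds the number of fields by $(cN-1)/2$, and it suffices to prove $cN\leq 3^{r+1}$.

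To bound $N$, I need $\delta_D=0$. The hypothesis $n\geq 2$ square-free excludes rational $3$-torsion on $E^D$: up to a scalar, the $3$-division polynomial of $y^2=4x^3+D$ factors as $x(x^3+D)$; the root $x=0$ gives $y^2=-27n^2<0$, and $x^3=27n^2$ admits a rational solution only if $n^2$ is a rational cube, which for square-free $n$ forces $n=1$. Hence $\delta_D=0$, and the preceding corollary (via \cite[Thm. 5.3]{knapp}) yields $N=3^r$. To bound $c$, observe that $-27D=(27n)^2$ is a perfect square in $\ZZ$, so the preceding corollary on $\ker(S^{\phi_D}\to S^3)$ forces $c\in\{1,3\}$, and in particular $c\leq 3$.

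Combining the two bounds gives $cN\leq 3\cdot 3^r=3^{r+1}$, whence the count is at most $(3^{r+1}-1)/2$, as claimed. The argument is essentially a bookkeeping exercise over the earlier results, so I do not anticipate a real obstacle; the only point deserving explicit mention is the edge case $n=1$, which corresponds to $D=-27$ and is not the discriminant of any cubic number field, so it may be safely excluded from the torsion analysis without affecting the conclusion.
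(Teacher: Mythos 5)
Your argument is correct and follows the paper's own route: apply the preceding proposition with $A=\ZZ$ to get the count $(cN-1)/2$, note $c\le 3$ from the corollary on $\ker\bigl(S^{\phi_D}(E^D/\QQ)\to S^3(E^D/\QQ)\bigr)$ since $-27D=(27n)^2$ is a square, and note $N=3^r$ because $\delta_D=0$. The only (harmless) deviation is that you verify the absence of rational $3$-torsion directly via the $3$-division polynomial $x(x^3+D)$ and treat the edge case $n=1$ explicitly, whereas the paper simply cites the torsion classification of \cite[Thm.~5.3]{knapp}; your self-contained check is in fact slightly more careful, since only the $3$-torsion (not the full torsion) is what the bound requires.
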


In section (\ref{section: pure cubic number fields}), we shall improve this bound to $2^r$.

\section{The cocycle associated to cubic fields}
\label{section: cocycle associated to cubic fields}
Now, we shall focus on relative extensions of number fields, i.e. $A = \mathcal{O}_K$ and $B = \mathcal{O}_L$, where $K$ is a number field and $L$ is a cubic extension of $K$. Assume that $\mathcal{O}_L$ is free over $\mathcal{O}_K$. Let $D$ be the discriminant of $L$ over $K$ and let $\beta$ be a primitive element of $L$ over $K$.

\begin{lemma}
    \label{lemma: Third point}
    Suppose we have a cocycle $\zeta: \gal(\overline{K}/K) \to E^{D}[\phi_D]$ given by 
    $$
    \zeta:\sigma\mapsto 
    \begin{cases}
        \mathcal{O} &\textnormal{if }\sigma(\beta) = \beta,\\
        (0:\sqrt{D}:1) &\textnormal{if }\sigma(\beta) = \beta',\\
        Q&\textnormal{if }\sigma(\beta) = \beta'',
    \end{cases}
    $$
    where $\beta'$ and $\beta''$ are different conjugates of $\beta$. Then, $Q = -(0:\sqrt{D}:1)$.
\end{lemma}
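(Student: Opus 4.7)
The plan is to read off $Q$ from the cocycle identity $\zeta(\sigma\tau)=\zeta(\sigma)+\sigma\cdot\zeta(\tau)$ applied to a single well-chosen element. The kernel $E^D[\phi_D]$ has order $3$ and is generated by $P:=(0:\sqrt{D}:1)$, so a priori $Q\in\{\mathcal{O},P,-P\}$; the task is to rule out the first two possibilities.

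The idea is to pick $\sigma\in\gal(\overline{K}/K)$ whose induced permutation on the three conjugates $\beta,\beta',\beta''$ is the $3$-cycle $\beta\mapsto\beta'\mapsto\beta''\mapsto\beta$. Such a lift exists: the Galois closure of $L/K$ has Galois group embedded in $S_3$ and therefore realises this $3$-cycle, and I simply take any set-theoretic preimage in $\gal(\overline{K}/K)$. By the defining prescription of $\zeta$, this element satisfies $\zeta(\sigma)=P$ (since $\sigma(\beta)=\beta'$) and $\zeta(\sigma^2)=Q$ (since $\sigma^2(\beta)=\beta''$).

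Next I would use that $\sigma$ fixes $\sqrt{D}$: a $3$-cycle in $S_3$ is an even permutation, hence lies in the subgroup $A_3$, which is exactly the stabiliser of the square root of the discriminant. Consequently the Galois action on $E^D[\phi_D]$ leaves $P$ invariant, and the cocycle relation applied to $\sigma\cdot\sigma$ yields
$$
Q \;=\; \zeta(\sigma^2) \;=\; \zeta(\sigma)+\sigma\cdot\zeta(\sigma) \;=\; P+P \;=\; -P,
$$
where the last equality uses that $P$ has order $3$.

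The only subtlety worth flagging is the need to pick $\sigma$ acting as a $3$-cycle rather than as a transposition swapping $\beta\leftrightarrow\beta'$: the latter has $\sigma^2(\beta)=\beta$, and it acts on $E^D[\phi_D]$ by sending $\sqrt{D}\mapsto -\sqrt{D}$, so the cocycle equation merely reproduces the tautology $\mathcal{O}=P+\sigma\cdot P=P-P$ and leaves $Q$ undetermined. Choosing an even permutation is exactly what pins down $Q$ from a single instance of the cocycle relation, and the existence of a lift with even image is guaranteed independently of whether $L/K$ is itself Galois.
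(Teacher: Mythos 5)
Your proof is correct and follows essentially the same route as the paper: evaluate the cocycle identity at $\sigma^2$ for a lift $\sigma$ acting as the $3$-cycle $\beta\mapsto\beta'\mapsto\beta''$, which fixes $\sqrt{D}$ (being even) and hence acts trivially on $E^{D}[\phi_D]$, giving $Q=\zeta(\sigma^2)=2P=-P$. The only difference is cosmetic: the paper splits into the cases $D$ a square ($\gal\simeq C_3$) and $D$ not a square ($\gal\simeq S_3$, with the generator fixing $\sqrt{D}$ chosen explicitly), whereas your parity observation handles both cases uniformly.
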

\begin{proof}
    Denote $P_D = (0:\sqrt{D}:1)$. We can restrict to $K$-automorphisms of the Galois closure of $L$. We distinguish two cases.  If $D$ is a square, then $L$ is Galois over $K$ and $\gal(L/K) \simeq C_3$. Let $\sigma$ be a generator of $\gal(L/K)$ such that $\sigma(\beta) = \beta'$. Then, $\zeta(\sigma^2) = \zeta(\sigma)^{\sigma} + \zeta(\sigma) = -P$. Otherwise, if $D$ is not a square, then $L$ is not Galois over $K$. Let $\Tilde{L}$ be its Galois closure, equal to $K(\beta, \sqrt{D})$. We have $\gal(\Tilde{L}/K) \simeq S_3$. The Galois group is generated by
    \begin{equation}
        \label{equation: Galois generators}
        \sigma:\binom{\beta\mapsto\beta'}{\sqrt{D}\mapsto \sqrt{D}}, \qquad
        \tau: \binom{\beta\mapsto\beta}{\sqrt{D}\mapsto -\sqrt{D}}
    \end{equation}
    Using the cocycle identity as before, one checks that $\zeta(\sigma^2) = \zeta(\sigma\tau) = -P_D$ and $\zeta(\sigma^2\tau) = P_D$, as we wanted. 
\end{proof}
\begin{lemma}
    \label{lemma: cocycle cubic field}
    The $\mathbb{F}_3$-orbit of the class of
    $$
    \xi_L:\sigma \mapsto 
    \begin{cases}
        \mathcal{O}&\textnormal{if }\sigma(\beta) = \beta\\
        (0:\sqrt{D}:1)&\textnormal{if }\sigma(\beta) = \beta'\\
        -(0:\sqrt{D}:1)&\textnormal{if }\sigma(\beta) = \beta''\\
    \end{cases}
    $$
    is the orbit of $\alpha_L$ in $H^1(K,E^D[\phi_D])$. In particular, if $\beta = \sqrt[3]{n}$ for some $n \in \mathcal{O}_K$, $\xi_L$ admits the expression
    $$
    \xi_L: \sigma \mapsto \log_{\mu_3}\lpar{\frac{\sigma(\beta)}{\beta}}(0:\sqrt{D}:1),
    $$
    where $\mu_3$ is a primitive cubic root of the unit.
\end{lemma}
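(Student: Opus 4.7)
My plan is to identify the class $[\xi_L]$ with $\pm \alpha_L$ in $H^1(K, E^D[\phi_D])$ by directly computing the defining cocycle $\sigma \mapsto \psi^\sigma \circ \psi^{-1}$ of $\alpha_L$ on an explicit $\overline{K}$-trivialization $\psi$ of $C_\mathcal{B}$. First I would check that $\xi_L$ is a well-defined cocycle: its values lie in the three-element kernel $E^D[\phi_D](\overline{K}) = \{\mathcal{O}, \pm P_D\}$ with $P_D = (0 : \sqrt{D} : 1)$; the cocycle identity on $\gal(\tilde{L}/K)$, where $\tilde{L}$ is the Galois closure of $L$ over $K$, is covered by the case analysis already carried out in Lemma \ref{lemma: Third point}; and $\xi_L$ is inflated from $\gal(\tilde{L}/K)$.

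Next I would build the trivialization using the fibre of $\pi_{I_\mathcal{B}}$ over the origin of $E^{-27D}$: the form $I_\mathcal{B}(X, Y)$ factors over $\tilde{L}$ as $a\prod_{i=1}^{3}(X - \beta_i Y)$, with $\beta_1 = \beta$, $\beta_2 = \beta'$, $\beta_3 = \beta''$ (up to a harmless linear adjustment, since $I_\mathcal{B}(X,1)$ is a defining polynomial for $L/K$ by Proposition \ref{proposition: irreducibility of index form}), and the projective points $Q_i = (\beta_i : 1 : 0) \in C_\mathcal{B}(\tilde{L})$ form the fibre. Any $\overline{K}$-isomorphism of coverings $\psi : C_\mathcal{B} \to E^D$ restricts to a bijection $\{Q_1, Q_2, Q_3\} \to \{\mathcal{O}, \pm P_D\}$; fix $\psi(Q_1) = \mathcal{O}$, $\psi(Q_2) = P_D$, $\psi(Q_3) = -P_D$. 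Evaluating $T_\sigma := \psi^\sigma \psi^{-1}$ at $Q_1$ yields $T_\sigma = \mathcal{O}$ whenever $\sigma(\beta) = \beta$ (because $\sigma$ then fixes $Q_1$ and $\sigma(\mathcal{O}) = \mathcal{O}$), and $T_\sigma = \pm P_D$ otherwise, with the two non-trivial values constrained to be opposite by the cocycle identity established in Lemma \ref{lemma: Third point}. This shows $[\xi_L] = \pm \alpha_L$; since $I_\mathcal{B}$ is irreducible by Proposition \ref{proposition: irreducibility of index form}, $\alpha_L$ is non-zero and the two $\mathbb{F}_3$-orbits coincide.

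For the case $\beta = \sqrt[3]{n}$, the identity $\sigma(\beta)^3 = \sigma(n) = n = \beta^3$ forces $\sigma(\beta)/\beta \in \mu_3$, so $\log_{\mu_3}(\sigma(\beta)/\beta) \in \mathbb{F}_3$ is well-defined. Taking $\beta' = \mu_3 \beta$ and $\beta'' = \mu_3^2 \beta$, the expression $\log_{\mu_3}(\sigma(\beta)/\beta)\cdot P_D$ returns $\mathcal{O}$, $P_D$, or $2 P_D = -P_D$ in the three cases of the piecewise definition of $\xi_L$, matching the claimed formula.

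The main obstacle I foresee is the Galois bookkeeping at the level of the trivialization $\psi$ in the non-Galois case $D \notin K^{\times 2}$: the element $\sqrt{D}$ appearing in $P_D$ is itself moved by some Galois automorphisms, and one must disentangle this from the permutation of the $Q_i$ to verify the signs consistently. Once this sign verification is carried out, the statement follows from the irreducibility of $I_\mathcal{B}$ and Lemma \ref{lemma: Third point}.
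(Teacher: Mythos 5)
Your overall strategy (trivialize the $\phi_D$-covering and read off $\sigma\mapsto\psi^{\sigma}\psi^{-1}$) is the right one, but your execution has a genuine gap at exactly the point you flag at the end and do not resolve. With an arbitrary $\overline{K}$-trivialization normalized only by $\psi(Q_1)=\mathcal{O}$, the cocycle value is the translation by $\psi^{\sigma}(Q_1)=\sigma\bigl(\psi(\sigma^{-1}Q_1)\bigr)$, and a priori this depends on more than $\sigma(\beta)$: among the automorphisms with $\sigma(\beta)=\beta'$ there are both a transposition and a $3$-cycle, for which $\sigma^{-1}Q_1$ is $Q_2$ in one case and $Q_3$ in the other, and the extra sign coming from $\sigma(\sqrt{D})=\pm\sqrt{D}$ has to compensate that exchange. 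Until you prove that $\psi^{\sigma}\psi^{-1}$ is constant on each of the three sets $\{\sigma(\beta)=\beta\}$, $\{\sigma(\beta)=\beta'\}$, $\{\sigma(\beta)=\beta''\}$, you cannot invoke Lemma \ref{lemma: Third point}: its hypothesis is precisely that the cocycle already has this piecewise shape, so using it to force the two nontrivial values to be opposite is circular at this stage. (The bookkeeping does in fact work out — in both the transposition and the $3$-cycle case one obtains the same element of $\{\pm P_D\}$, with $P_D=(0:\sqrt{D}:1)$ — but this verification is the actual content of the step and is missing from your proposal. A smaller point: you cannot freely impose $\psi(Q_2)=P_D$ and $\psi(Q_3)=-P_D$; the covering automorphisms of $(E^D,\phi_D)$ are only the translations by $E^D[\phi_D]$, so once $\psi(Q_1)=\mathcal{O}$ is fixed the images of $Q_2,Q_3$ are determined. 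This only affects the overall sign, hence is harmless for the orbit statement.)

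The paper sidesteps all of this sign bookkeeping by a better choice of trivialization: since $I_{L/K}(X,1)$ has a root in $L$, the form $I_{L/K}$ is reducible over $L$ and hence $\spl_2(L)$-equivalent to $f_0=X^2Y-(D/4)Y^3$, so one may take $\psi$ defined over $L$. Then $\psi^{\sigma}\psi^{-1}$ manifestly depends only on $\sigma|_L$, i.e.\ only on $\sigma(\beta)$, the piecewise form is automatic, and Lemma \ref{lemma: Third point} applies at once, with nontriviality of $\alpha_L$ (irreducibility of the index form) forcing the nonzero value. So either complete your argument by carrying out the two-case sign check above, or replace your arbitrary $\overline{K}$-isomorphism by an $L$-rational one, which turns your computation into the paper's proof. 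Your final paragraph on the Kummer-generator case $\beta=\sqrt[3]{n}$ is fine as stated.
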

\begin{proof}
    Let $I_{L/K}$ be the index form of $L$ over $K$. Since $I_{L/K}$ is reducible over $L$, we have that $I_{L/K}$ is $\spl_2(L)$-equivalent to $f_0$. This induces a $L$-isomorphism of coverings $\psi: C_{L/K}\to E^D$. The orbit associated to $L$ is given by $\scprod{\alpha_L}$, where $\alpha_L$ is the class of $\sigma \mapsto \psi^{\sigma}\psi^{-1}$. Since $\psi$ is defined over $L$, any two elements of $G_{K}$ yielding the same restriction to $L$ have the same image by $\alpha_L$. Thus, $\alpha_L$ is of the form 
    $$
    \sigma\mapsto
    \begin{cases}
        \mathcal{O}&\textnormal{if }\sigma(\beta) = \beta,\\
        P&\textnormal{if }\sigma(\beta) = \beta',\\
        Q&\textnormal{if }\sigma(\beta) = \beta''.
    \end{cases}
    $$
    Since $\alpha_L$ is non-trivial, we have $P \neq \mathcal{O}$, so the result follows by lemma (\ref{lemma: Third point}).
\end{proof}
\section{From points of the jacobian to elements in cohomology}
\label{section: from points to orbits}
Our general approach is based on the following idea. The soluble part of $S^{\phi_D}(E^D/\QQ)$ is parametrised by $E^{-27D}(K)/\phi_D(E^D(K))$. Given generators of the latter, if we are able to explicitly describe their image in $S^{\phi_D}(E^D/\QQ)$ and distinguish the classes corresponding to fields, then the problem of determining all the quasi-monogenic cubic number fields is reduced to computing generators of $E^{-27D}(K)/\phi_D(E^D(K))$. The following theorem is the fundamental step towards relating all of these objects. Recall that, for $P$ in $E^D(K)$, we define $L_P = K[X]/(X^3 -x_0X^2 + D).$ 
\begin{lemma}
    \label{lemma: explicit cocycle}
    Let $P = (x_0,y_0) \in E^{-27D}(K) \setminus \phi_D(E^D(K))$. The image of $[P]$ in $S^{\phi_D}(E^D/K)$ is, up to sign, the class of
    $$
    \xi_P:\sigma \mapsto
    \begin{cases}
        \mathcal{O}&\textnormal{if }\sigma(\alpha) = \alpha,\\
        (0:\sqrt{D}:1)&\textnormal{if }\sigma(\alpha) = \alpha',\\
        -(0:\sqrt{D}:1)&\textnormal{if }\sigma(\alpha) = \alpha'',\\
    \end{cases}
    $$
    where $\alpha$ is any primitive element of $L_P$ and $\alpha', \alpha''$ are conjugates of $\alpha$. In particular, if the discriminant of $K(\alpha)$ is $D$, then $K(\alpha)$ is quasi-monogenic.
\end{lemma}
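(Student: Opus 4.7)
The plan is to apply the explicit description of the connecting map $\delta\colon E^{-27D}(K)/\phi_D(E^D(K)) \hookrightarrow S^{\phi_D}(E^D/K)$ from (\ref{equation: Selmer-TS sequence}), which sends $[P]$ to the class of the cocycle $\sigma \mapsto Q^\sigma - Q$, for any $Q \in E^D(\overline K)$ with $\phi_D(Q) = P$. The aim is to choose $Q$ wisely so that this cocycle coincides, up to sign, with $\xi_P$.

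First I would describe $\phi_D^{-1}(P)$ explicitly using the formula (\ref{equation: isogeny}). If $\phi_D(x, y) = (x_0, y_0)$, then $x_0 = (x^3 + D)/x^2$, i.e.\ $x$ is a root of $X^3 - x_0 X^2 + D$, which is precisely the polynomial defining $L_P$. Taking $\alpha$ to be a primitive element of $L_P$ and setting $\eta := y_0\alpha^3/(\alpha^3 - 2D)$, the point $Q = (\alpha,\eta)$ is a preimage of $P$ lying in $E^D(L_P)$, and the three elements of $\phi_D^{-1}(P)$ are $Q$, $Q+T$ and $Q-T$, where $T = (0,\sqrt D)$ generates $\ker\phi_D$. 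Their $x$-coordinates run through the roots $\alpha, \alpha', \alpha''$ of $X^3 - x_0 X^2 + D$ in some order; fix the labels so that $Q+T$ has $x$-coordinate $\alpha'$ and $Q-T$ has $x$-coordinate $\alpha''$.

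Next I would exploit Galois equivariance. Because $\eta$ is a rational expression in $\alpha$ over $K$, we have $\sigma(\eta) = y_0\sigma(\alpha)^3/(\sigma(\alpha)^3 - 2D)$, so $Q^\sigma$ is the unique preimage of $P$ whose $x$-coordinate equals $\sigma(\alpha)$. Combined with the chosen labeling, this yields $Q^\sigma = Q$ when $\sigma(\alpha) = \alpha$, $Q^\sigma = Q+T$ when $\sigma(\alpha) = \alpha'$, and $Q^\sigma = Q-T$ when $\sigma(\alpha) = \alpha''$; subtracting $Q$ in each case recovers exactly the cocycle $\xi_P$. The two residual sign choices---that of $\sqrt D$ (equivalently, of $T$) and the ordering of $\{\alpha',\alpha''\}$---together flip $\xi_P$ to $-\xi_P$, which is the content of ``up to sign''. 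Lemma (\ref{lemma: Third point}) also confirms that the value at the third conjugation is forced by the cocycle identity, so no further freedom is introduced.

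For the final assertion, if $K(\alpha)$ has discriminant $D$, then $K(\alpha)$ is a cubic extension of $K$ generated by a root of $X^3 - x_0 X^2 + D$, so $K(\alpha) \simeq L_P$. Theorem (\ref{theorem: characterization of quasi-monogenity}) applied to the $K$-rational point $P \in E^{-27D}(K)$ then yields that $L_P$, and hence $K(\alpha)$, is quasi-monogenic. The main obstacle throughout is the careful bookkeeping of the sign conventions: one must verify that every consistent choice of $\sqrt D$ and of the labeling of $\alpha'$, $\alpha''$ produces a cocycle in $\{\xi_P, -\xi_P\}$, matching the statement on the nose.
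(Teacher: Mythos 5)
Your computation of the image of $[P]$ under the connecting map is correct and is essentially the paper's argument: choose the preimage $Q=(\alpha,\eta)$ with $\eta$ a rational expression in $\alpha$ over $K$ (so $Q$ is defined over $L_P$), observe that $\sigma\mapsto Q^\sigma-Q$ takes values in $\ker\phi_D=\{\mathcal{O},\pm(0:\sqrt{D}:1)\}$ and depends only on $\sigma(\alpha)$, and invoke Lemma (\ref{lemma: Third point}) to pin down the value on the third conjugate; the residual ambiguities (choice of $\sqrt{D}$, labeling of $\alpha',\alpha''$) account exactly for the ``up to sign''.

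The gap is in the last paragraph. You deduce the ``in particular'' clause from Theorem (\ref{theorem: characterization of quasi-monogenity}), but in the paper that theorem is proved \emph{from} this lemma: the very first sentence of its proof is that $L\simeq L_P$ implies quasi-monogenity ``by Lemma (\ref{lemma: explicit cocycle})''. So your appeal to the theorem is circular, and the implication you need is precisely the content you are supposed to establish here. The correct route, which the paper takes, is via Lemma (\ref{lemma: cocycle cubic field}): when $\disc(K(\alpha))=D$, the cocycle $\xi_P$ you have just computed coincides, up to sign, with $\xi_{K(\alpha)}$, whose class generates the orbit of $\alpha_{K(\alpha)}$ in $H^1(K,E^D[\phi_D])$. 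Since $\xi_P$ lies in the image of $\delta$, the class $\alpha_{K(\alpha)}$ lies in the soluble part of $S^{\phi_D}(E^D/K)$, i.e.\ it dies in $H^1(K,E^D)$; hence the covering $C_{\mathcal{B}}$ given by $I_{\mathcal{B}}(X,Y)=Z^3$ has a $K$-rational point, which (because $I_{\mathcal{B}}$ is irreducible, so $Z\neq 0$) means the index form represents a unit over $K$, i.e.\ $K(\alpha)$ is quasi-monogenic. Adding this identification of $\xi_P$ with $\pm\xi_{K(\alpha)}$ and the solubility argument closes the gap without invoking Theorem (\ref{theorem: characterization of quasi-monogenity}).
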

\begin{proof}
    Let $Q$ be a preimage of $P$ by $\phi_D$ and let $\alpha$ be its first component. The relation $\phi_D(Q) = P$ shows that $\alpha^3 - x_0\alpha^2 + D = 0$. Now, formula (\ref{equation: isogeny}) shows that the second component of $Q$ is in $K(\alpha)$. Different elements of $G_{K}$ yielding the same restriction to $K(\alpha)$ map to the same element by $\sigma \mapsto [Q^{\sigma} - Q]$, so the expression for the cocycle follows from lemma (\ref{lemma: Third point}). When $K(\alpha)$ has discriminant $D$, this cocycle is, up to sign, $\xi_{K(\alpha)}$ (see \ref{lemma: cocycle cubic field}), so $K(\alpha)$ defines a soluble orbit of $S^{\phi_D}(E^D/K)$. Thus, $K(\alpha)$ is quasi-monogenic.
\end{proof}
\textit{Proof of theorem \ref{theorem: characterization of quasi-monogenity}}. If $L$ is isomorphic to $L_P$ for some $P \in E^{-27D}(K)$, then $L$ is quasi-monogenic by lemma \ref{lemma: explicit cocycle}. Conversely, suppose $L$ is quasi-monogenic over $K$. Then, $\alpha_L$ is in the soluble part of $S^{\phi_D}(E^D/K)$, i.e. $\alpha_L = [\xi_P]$ for some $P \in E^{-27D}(K)$. This means that $\xi_L - \xi_P$ is a coboundary. By choosing a suitable $\alpha$ in lemma \ref{lemma: explicit cocycle}, we can suppose that $\xi_L = \xi_P$. Then, by the expressions shown in lemmas \ref{lemma: explicit cocycle} and \ref{lemma: cocycle cubic field}, we have that $\gal(\overline{K}/L) = \gal(\overline{K}/L_P)$, so $L = L_P$. \qed
\section{Pure cubic number fields}
\label{section: pure cubic number fields}
We shall focus on pure cubic number fields, i.e. cubic extensions of the form $L = \QQ(\sqrt[3]{m})$, where $m$ is a cube-free integer. We can write $m = hk^2$ with $h, k$ coprime integers. The expression for the discriminant of $L$ depends on $m$ modulo 9:
\begin{enumerate}[(i)]
    \item If $m \not \equiv \pm 1$ modulo 9, then $\disc(L) = -27(hk)^2$. Using the same notation as in \cite[\S 5]{BARRUCAND19707}, We will refer to these fields as \textit{Dedekind type I fields.}
    \item If $m \equiv \pm 1$ modulo 9, then $\disc(L) = -3(hk)^2$. As in \cite[\S 5]{BARRUCAND19707}, we will call these fields \textit{Dedekind type II fields}.
\end{enumerate}
Thus, the discriminant of a pure cubic number field is of the form $-3n^2$, $n \in \ZZ$. Conversely, if the discriminant of a cubic number field is of the form $-3n^2$, then it is a pure cubic number field. Indeed, $\disc(L)$ can be written uniquely as $df^2$, where $d$ is a fundamental discriminant, and pure cubic number fields correspond to $d = -3$ (see \cite[\S 6.4.5]{Cohen}). We shall fix $D = -3n^2$ for some $n \in \ZZ$ and a point $P = (x_0, y_0)$ on $E^{-27D}(\QQ)$.

\begin{lemma}
    \label{lemma: cocycle pure field}
    $L_P$ is a pure cubic number field.
\end{lemma}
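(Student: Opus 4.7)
The plan is to invoke the characterization recalled in the paragraph just above this lemma: a cubic number field is pure cubic if and only if its discriminant has the form $-3m^2$ for some $m\in\ZZ$. So it suffices to show that $\disc(L_P)$ has this shape. Since the field discriminant differs from the discriminant of any defining polynomial by a rational square, I will compute the discriminant of the defining polynomial $f(X) = X^3 - x_0 X^2 + D$ of $L_P$ and show it is of the form $-3(\text{rational})^2$.

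Concretely, apply the standard formula for the discriminant of a monic cubic $X^3+pX^2+qX+r$, namely $p^2q^2 - 4q^3 - 4p^3r + 18pqr - 27r^2$, with $(p,q,r) = (-x_0,0,D)$. This collapses to $\Delta(f) = D(4x_0^3 - 27D)$. The crucial observation is that since $P = (x_0,y_0)$ lies on $E^{-27D}:Y^2 = 4X^3 - 27D$, the factor $4x_0^3 - 27D$ is exactly $y_0^2$, so $\Delta(f) = Dy_0^2 = -3(ny_0)^2$. Dividing by the rational square $[\mathcal{O}_{L_P}:\ZZ[\alpha]]^2$ gives $\disc(L_P) = -3s^2$ for some $s\in\QQ$; since $\disc(L_P)\in\ZZ$ and $-3$ is squarefree, $s$ must actually be an integer, and the characterization above then identifies $L_P$ as pure cubic.

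The main thing to take care of is ensuring that $L_P$ is genuinely a cubic field, i.e., that $f$ is irreducible over $\QQ$. A rational root $\alpha$ of $f$ would satisfy $x_0 = (\alpha^3+D)/\alpha^2$, and then formula \eqref{equation: isogeny} for $\phi_D$ furnishes a rational second coordinate $\beta = y_0\alpha^3/(\alpha^3 - 2D)$ with $\phi_D(\alpha,\beta) = P$; hence a rational root of $f$ would force $P \in \phi_D(E^D(\QQ))$, a case in which the cocycle $\xi_P$ of Lemma \ref{lemma: explicit cocycle} is trivial and $L_P$ is not intended to describe a field. Outside of this degenerate situation $f$ is irreducible, so $L_P$ is a cubic number field, and the discriminant argument above shows it is pure cubic.
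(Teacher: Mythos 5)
Your proof is correct and follows essentially the same route as the paper: compute the discriminant of the defining polynomial, use the curve equation $y_0^2 = 4x_0^3 - 27D$ to get $\Delta(f) = Dy_0^2 = -3(ny_0)^2$, and conclude via the characterization of cubic fields with discriminant of the form $-3n'^2$ as pure cubic (your sign is in fact the correct one; the paper's stated $-y_0^2D$ is a slip, and your phrasing ``differs by a rational square'' also avoids assuming $x_0$ is integral). Your extra verification that $f$ is irreducible --- a rational root would force $P \in \phi_D(E^D(\QQ))$ --- addresses a point the paper leaves implicit and is a welcome precision.
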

\begin{proof}
    The discriminant of $f$ is $-y_0^2D$. Write $L_P = \QQ(\alpha)$. Since $\Delta(f) = [\mathcal{O}_{L_P}:\ZZ[\alpha]]^2\disc(L_P)$, we have that $\disc(L_P)$ is of the form $-3n'^2$ for some $n' \in \ZZ$. Then, the result follows from the previous discussion.
\end{proof}
\begin{remark}
    \label{remark: Trivially mono}
    Since $-27D$ is a square, we have $E^{-27D}(\QQ)[\hat{\phi}_D] = \scprod{(0, \sqrt{-27D})}$. The image of this orbit in $S^{\phi_D}(\QQ)$ is, up to sign, the class of  
    $$
    \sigma \mapsto \log_{\mu_3}\lpar{\frac{\sigma(\sqrt[3]{D})}{\sqrt[3]{D}}}(0:\sqrt{D}:1), 
    $$
    i.e. $L_{(0, \sqrt{-27D})} = \QQ(\sqrt[3]{D})$. If $3|n$ and $n/3 \not \equiv \pm 1$ modulo 9, then this cocycle is $\xi_{\QQ(\sqrt[3]{n/3})}$, which is always monogenic since its index form is $X^3 - (n/3)Y^3$. We shall refer to these fields as \textit{trivially monogenic}.
\end{remark}

Now, our goal is to compute a primitive element of $L$ of the form $\sqrt[3]{m}$, with $m \in \ZZ$ square-free. By the substitution $\alpha \mapsto \alpha - x_0/3$, we note that another defining polynomial for $L$ is 
$$
g(X) = X^3 - \frac{x_0^2}{3}X + \frac{27D - 2x_0^3}{27}.
$$
Let $G$ be the homogenization of $g$. By example (\ref{proposition: irreducibility of index form}) and lemma (\ref{lemma: cocycle pure field}), we know that $G = \gamma\star F$, where $F$ is of the form $X^3 - mY^3$ for some cube-free integer $m$. We write $\gamma_{i,j}$ for the coefficient in the $i$-th row and $j$-th column of $\gamma$, $i, j \in \{1,2\}$. 
\begin{lemma}
    We can write $\gamma_{1,2}, \gamma_{2,1}, \gamma_{2,2}$ in terms of $\gamma_{1,1}$, $n$ and $m$ as follows:
    \begin{align*}
        \gamma_{1,2} &= \frac{x_0^2}{9m\gamma_{1,1}}\\
        \gamma_{2,1} &= \frac{(2x_0^3 - 27D \pm 9y_0n)\gamma_{1,1}}{2x_0^2}\\
        \gamma_{2,2} &= \frac{2x_0^3 - 27D \mp 9y_0n}{18m\gamma_{1,1}}
    \end{align*}
\end{lemma}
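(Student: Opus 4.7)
\textit{Proof plan.} My plan is to directly expand the defining equation $G = \gamma\star F$ as a system of polynomial equations in the entries of $\gamma$, and then solve the system using the equation of the elliptic curve at the crucial step.

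Writing $a = \gamma_{1,1}$, $b = \gamma_{1,2}$, $c = \gamma_{2,1}$, $d = \gamma_{2,2}$, and setting $s := 2x_0^3 - 27D$, the identity $G = \gamma\star F$ reads
$$\det(\gamma)\, G(X,Y) = (aX + cY)^3 - m(bX + dY)^3.$$
Matching the coefficients of $X^3, X^2 Y, XY^2, Y^3$ produces four equations. From $X^3$ and $X^2Y$ I get $a^3 - mb^3 = ad - bc$ and $a^2 c = mb^2 d$; viewing these as a linear system in $(c,d)$, and noting that $a^3\ne mb^3$ since $G$ is irreducible by Proposition \ref{proposition: irreducibility of index form}, one solves uniquely $c = mb^2$ and $d = a^2$.

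I would then plug $c = mb^2$, $d = a^2$ into the $XY^2$ and $Y^3$ equations. After cancelling the common factor $a^3 - mb^3$, the $XY^2$ equation collapses to $ab = x_0^2/(9m)$, which is precisely the first claimed formula $\gamma_{1,2} = x_0^2/(9m\gamma_{1,1})$. The $Y^3$ equation collapses, similarly, to
$$a^3 + mb^3 = \frac{s}{27m}.$$
Combined with the identity $a^3\cdot mb^3 = m(ab)^3 = x_0^6/(729 m^2)$, this shows that $a^3$ and $mb^3$ are the two roots of a quadratic whose discriminant is $(s^2 - 4x_0^6)/(729 m^2)$.

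The crucial step is simplifying this discriminant using the elliptic curve equation $y_0^2 = 4x_0^3 - 27D$:
$$s^2 - 4x_0^6 = -27D(4x_0^3 - 27D) = -27Dy_0^2 = 81 n^2 y_0^2,$$
using $D = -3n^2$. Hence $\sqrt{s^2 - 4x_0^6} = \pm 9ny_0$, and solving the quadratic gives
$$a^3 = \frac{s \pm 9ny_0}{54m}, \qquad mb^3 = \frac{s \mp 9ny_0}{54m}.$$

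Finally, I would substitute these into $\gamma_{2,1} = c = mb^2 = (mb^3)/b$ and $\gamma_{2,2} = d = a^2 = a^3/a$; the product identity $(s+9ny_0)(s-9ny_0) = 4x_0^6$ allows one to rewrite the resulting expressions in the compact form claimed. The main obstacle is this last algebraic manipulation together with the careful correlation of the $\pm$ signs across the formulas for $\gamma_{2,1}$ and $\gamma_{2,2}$, which correspond to the two choices of square root $\pm 9ny_0$.
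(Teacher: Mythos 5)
Your proposal is correct, but it follows a genuinely different route from the paper's. The paper exploits covariance of the Hessian under the twisted action: from $h_G(X,Y)=h_F((X,Y)\cdot\gamma)$ with $h_F=-9mXY$ it obtains the three bilinear relations $\gamma_{1,1}\gamma_{1,2}=x_0^2/(9m)$, $\gamma_{1,1}\gamma_{2,2}+\gamma_{1,2}\gamma_{2,1}=(2x_0^3-27D)/(27m)$, $\gamma_{2,1}\gamma_{2,2}=x_0^4/(81m)$, eliminates $\gamma_{2,2}$ and solves a quadratic in $\gamma_{2,1}$ whose discriminant becomes the square $(27y_0n\gamma_{1,1})^2$ via the curve equation. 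You instead match coefficients directly in $\det(\gamma)\,G(X,Y)=F((X,Y)\cdot\gamma)$: the $X^3$ and $X^2Y$ coefficients give a linear system forcing $\gamma_{2,1}=m\gamma_{1,2}^2$ and $\gamma_{2,2}=\gamma_{1,1}^2$ (uniquely, since $\gamma_{1,1}^3-m\gamma_{1,2}^3=\det\gamma\neq 0$, which is a simpler justification than invoking irreducibility), and the remaining coefficients give the sum and product of $\gamma_{1,1}^3$ and $m\gamma_{1,2}^3$, so you solve a quadratic whose discriminant is turned into a square by the same use of $y_0^2=4x_0^3-27D$ and $D=-3n^2$. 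Your route is more elementary (no covariants needed) and yields the extra structural identities $\gamma_{2,1}=m\gamma_{1,2}^2$, $\gamma_{2,2}=\gamma_{1,1}^2$; the paper's route buys shorter algebra because the Hessian is only quadratic in the matrix entries. One caveat: your computation, exactly like the paper's own proof, produces $\gamma_{2,1}=(2x_0^3-27D\pm 9y_0n)\gamma_{1,1}/(6x_0^2)$ and $\gamma_{2,2}=(2x_0^3-27D\mp 9y_0n)/(54m\gamma_{1,1})$, so the "compact form claimed" is reached only up to a factor of $3$: the denominators $2x_0^2$ and $18m\gamma_{1,1}$ printed in the lemma are inconsistent (they would violate $\gamma_{2,1}\gamma_{2,2}=x_0^4/(81m)$) and appear to be a typo in the statement; this does not affect your argument, nor the later use in Proposition \ref{theorem: primitive element}, where the spurious factors cancel in the ratio $\gamma_{1,1}^2\gamma_{2,1}/(\gamma_{1,2}^2\gamma_{2,2})$.
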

\begin{proof}
    We have $G = \gamma \star F$, so $h_G(X,Y) = h_{\gamma\star F}(X,Y) = h_F((X,Y)\cdot\gamma)$. Equating the corresponding terms, we obtain the following equalities: 
    \begin{align*}
        X^2:& -x_0^2 = -9m\gamma_{1,1}\gamma_{1,2}\\
        XY:& \frac{-2x_0^3 + 27D}{3} = -9m(\gamma_{1,1}\gamma_{2,2} + \gamma_{1,2}\gamma_{2,1})\\
        Y^2:&\frac{-x_0^4}{9} = -9m\gamma_{2,1}\gamma_{2,2}
    \end{align*}
    From the first equality, we get $\gamma_{1,2} = \frac{x_0^2}{9m\gamma_{1,1}}$. From the second equality, 
    $$
    \gamma_{2,2} = \frac{2x_0^3 - 27D}{27m\gamma_{1,1}} - \frac{\gamma_{1,2}\gamma_{2,1}}{\gamma_{1,1}} = \frac{2x_0^3\gamma_{1,1} - 3x_0^2\gamma_{2,1} - 27\gamma_{1,1}D}{27m\gamma_{1,1}^2}.
    $$
    Substituting $\gamma_{2,2}$ by this expression in the third equality, we get 
    $$
    x_0^4\gamma_{1,1}^2 + 3(27D - 2x_0^3)\gamma_{1,1}\gamma_{2,1} + 9x_0^2\gamma_{2,1}^2 = 0,
    $$
    which is a quadratic equation in $\gamma_{2,1}$. The discriminant $\Delta$ of this equation is $(27y_0\gamma_{1,1}n)^2$, which is in $(\QQ^{\times})^2$. Thus, 
    $$
    \gamma_{2,1} = \frac{(2x_0^3 - 27D \pm 9y_0n)\gamma_{1,1}}{6x_0^2}.
    $$
    Finally, 
    \begin{equation*}
        \gamma_{2,2} = \frac{2x_0^3\gamma_{1,1} - 3x_0^2\gamma_{2,1} - 27\gamma_{1,1}D}{27m\gamma_{1,1}^2} = \frac{2x_0^3 - 27D \mp 9y_0n}{54m\gamma_{1,1}} \qedhere
    \end{equation*}
\end{proof}
\begin{proposition}
    \label{theorem: primitive element}
    Let $D = -3n^2$ for some $n \in \ZZ$. Let $P = (x_0,y_0)$ be a rational point on $E^{-27D}$ and let $L_P$ be the real field with defining polynomial $X^3 - x_0X^2 + D$. Then,
    $$
    L_P = \QQ\lpar{\sqrt[3]{\frac{y_0 - 9n}{y_0 + 9n}}}.
    $$
\end{proposition}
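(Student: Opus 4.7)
The plan is to deploy the formulas of the previous lemma at the coefficient of $X^3$ in the identity $G = \gamma\star F$, so as to pin down $m$ up to rational cubes. Since $F(X,Y) = X^3 - m Y^3$, comparing coefficients of $X^3$ in $\gamma\star F = G$ gives
\begin{equation*}
    \gamma_{1,1}^3 - m\,\gamma_{1,2}^3 = \det(\gamma).
\end{equation*}
Substituting the previous lemma's expressions for $\gamma_{1,2}$ and $\det(\gamma)$ in terms of $\gamma_{1,1}$, $n$ and $m$, this becomes a quadratic equation in $m$ with $\gamma_{1,1}$ as parameter.

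The discriminant of this quadratic simplifies via the identity
\begin{equation*}
    (2 x_0^3 + 81 n^2)^2 = 4 x_0^6 + 81\, y_0^2\, n^2,
\end{equation*}
a direct consequence of the curve equation $y_0^2 = 4 x_0^3 - 27 D = 4 x_0^3 + 81 n^2$, and becomes a perfect square. A further application of the same relation, via
\begin{equation*}
    2 x_0^3 + 81 n^2 \pm 9 y_0 n = \tfrac{1}{2}(y_0 \pm 9n)^2,
\end{equation*}
yields $m = \pm (y_0 \pm 9n)^2 / (108\, \gamma_{1,1}^3)$. Since $\gamma_{1,1}\in\QQ^*$ its cube is a rational cube, and $-1$ is also a rational cube, so modulo $(\QQ^*)^3$ we obtain $m \equiv (y_0 \pm 9n)^2/108$. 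The two sign choices collapse to the same field: their product is $(y_0+9n)^2(y_0-9n)^2/108^2 = (x_0^2/9)^3$, a rational cube, so they are related by $m\leftrightarrow m^2 \pmod{(\QQ^*)^3}$ and hence $\QQ(\sqrt[3]{(y_0+9n)^2/108}) = \QQ(\sqrt[3]{(y_0-9n)^2/108})$.

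It then remains to identify this cubic extension with $\QQ(\sqrt[3]{(y_0-9n)/(y_0+9n)})$. Using $(y_0-9n)(y_0+9n) = y_0^2 - 81 n^2 = 4 x_0^3$, a direct computation gives
\begin{equation*}
    \frac{(y_0-9n)^2/108}{(y_0-9n)/(y_0+9n)} = \frac{(y_0-9n)(y_0+9n)}{108} = \frac{4 x_0^3}{108} = \left(\frac{x_0}{3}\right)^3,
\end{equation*}
a rational cube; hence $(y_0-9n)^2/108$ and $(y_0-9n)/(y_0+9n)$ lie in the same class of $\QQ^*/(\QQ^*)^3$, and so $L_P = \QQ(\sqrt[3]{m}) = \QQ(\sqrt[3]{(y_0-9n)/(y_0+9n)})$, as claimed.

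The main technical issue I expect is the sign bookkeeping: the $\pm$'s inherited from the previous lemma combined with those from the quadratic formula produce four candidate values of $m$, and one must verify (as sketched above) that all of them yield the same cubic field. A secondary nondegeneracy point is that the stated formula fails when $y_0+9n=0$, which by the curve equation forces $x_0=0$, corresponding to a $3$-torsion point of $E^{-27D}$ and the trivially monogenic case singled out in the preceding remark.
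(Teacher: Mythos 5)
Your argument is correct and arrives at exactly the paper's intermediate formula $m=\pm(y_0\pm 9n)^2/(108\,\gamma_{1,1}^3)$, but by a different coefficient comparison. The paper exploits the vanishing of the $X^2Y$-coefficient of $G=\gamma\star F$, which yields $m=\gamma_{1,1}^2\gamma_{2,1}/(\gamma_{1,2}^2\gamma_{2,2})$; plugging in the lemma's entries expresses $m$ in terms of $m^3\gamma_{1,1}^6$, and a square root collapses via the curve equation to $m=\pm(x_0/3\gamma_{1,1})^3\,(y_0\mp 9n)/(y_0\pm 9n)$, which is the same class in $\QQ^{\times}/(\QQ^{\times})^3$ as your expression since $(y_0-9n)(y_0+9n)=4x_0^3$. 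You instead equate the $X^3$-coefficients, $\gamma_{1,1}^3-m\gamma_{1,2}^3=\det(\gamma)$, and solve a quadratic in $m$; your identities $(2x_0^3+81n^2)^2=4x_0^6+81y_0^2n^2$ and $2x_0^3+81n^2\pm 9y_0n=\tfrac{1}{2}(y_0\pm 9n)^2$ are correct, and so is the sign bookkeeping (the product of the two candidate classes is $(x_0^2/9)^3$, so they are mutually inverse modulo cubes and give the same field). The one step you should make explicit is the source of $\det(\gamma)$: the preceding lemma does not state it, and your discriminant-of-the-quadratic computation only works with $\det(\gamma)=\mp y_0n/(3m)$. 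That value is correct and is easy to justify via the covariance $\Delta(\gamma\star F)=\det(\gamma)^2\Delta(F)$ together with $\Delta(F)=-27m^2$ and $\Delta(G)=Dy_0^2$; but beware that computing $\det(\gamma)$ from the lemma's entries as printed (denominators $2x_0^2$ and $18m\gamma_{1,1}$, versus $6x_0^2$ and $54m\gamma_{1,1}$ obtained inside its proof) would give $\mp y_0n/m$ and spoil your perfect square, so derive it independently. Your closing observation that $y_0+9n=0$ forces $x_0=0$, i.e.\ the torsion point handled by the trivially monogenic remark, is a nondegeneracy caveat the paper's own proof also leaves implicit, and is worth keeping.
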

\begin{proof}
We have that the term in $X^2Y$ of $\gamma \star F$ is 
$$
\frac{3\gamma_{1,1}^2\gamma_{2,1}-3m\gamma_{1,2}^2\gamma_{2,2}}{\det(\gamma)}.
$$
Since $\gamma\star F = G$, this coefficient has to be 0. Thus, 
$$
m = \frac{\gamma_{1,1}^2\gamma_{2,1}}{\gamma_{1,2}^2\gamma_{2,2}} = \frac{729(2x_0^3 - 27D \pm 9y_0n)}{x_0^6(2x_0^3 - 27D \mp 9y_0n)}m^3\gamma_{1,1}^6, 
$$
so 
\begin{align*}
    m &= \pm \lpar{\frac{x_0}{3\gamma_{1,1}}}^3\sqrt{\frac{2x_0^3 - 27D \mp 9y_0n}{2x_0^3 - 27D \pm 9y_0n}} = \pm \lpar{\frac{x_0}{3\gamma_{1,1}}}^3\sqrt{\frac{y_0^2 - 81n^2 \mp 18y_0n}{y_0^2 - 81n^2 \pm 18y_0n}}\\
    &= \pm \lpar{\frac{x_0}{3\gamma_{1,1}}}^3\frac{y_0 \mp 9n}{y_0 \pm 9n}.
\end{align*}
Any rational number $m'$ equivalent to $m$ in $\QQ^{\times}/(\QQ^{\times})^3$ satisfies $\QQ(\sqrt[3]{m'}) = \QQ(\sqrt[3]{m})$, so 
$$
\QQ(\alpha) = \QQ\lpar{\sqrt[3]{\frac{y_0 \mp 9n}{y_0 \pm 9n}}} = \QQ\lpar{\sqrt[3]{\frac{y_0 - 9n}{y_0 + 9n}}}
$$
\end{proof} 

\begin{remark}
    From this proposition, we can obtain a primitive element for $L$ of the form $\sqrt[3]{hk^2}$, with $h$ and $k$ coprime and square-free integers. To do this, let $\pm p_1^{e_1}\cdots p_r^{e_r}$ be the prime decomposition of $(y_0 - 9n)/(y_0 + 9n)$ (since this is a rational number, we allow the exponents $e_i$ to be negative). Then, 
    $$
    \pm p_1^{e_1}\cdots p_r^{e_r} \equiv p_1^{e_1\bmod 3}\cdots p_r^{e_r\bmod 3} \quad \textnormal{in }\QQ^{\times}/(\QQ^{\times})^3.
    $$
    The latter is a cube free integer and its cubic root is a primitive element for $L$, as we wanted. From now on, we will denote $p_1^{e_1\bmod 3}\cdots p_r^{e_r\bmod 3}$ by $m$.
\end{remark}
\begin{proposition}
    \label{proposition: factors of n}
    Any prime factor of $m$ different from 3 divides $n$.
\end{proposition}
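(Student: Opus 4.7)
The plan is to translate the divisibility condition ``$p \mid m$'' into a statement about the $p$-adic valuation of the rational number $(y_0-9n)/(y_0+9n)$, and then exploit the curve equation $y_0^2 = 4x_0^3 + 81n^2$ (which is just the defining equation of $E^{-27D}$ with $D = -3n^2$) rewritten in the factored form
\[
(y_0 - 9n)(y_0 + 9n) = 4x_0^3.
\]
By construction, a prime $p$ divides $m$ exactly when $v_p\bigl((y_0-9n)/(y_0+9n)\bigr) = v_p(y_0-9n) - v_p(y_0+9n) \not\equiv 0 \pmod 3$, so the goal becomes: for every prime $p \neq 3$ with $p \nmid n$, the difference $v_p(y_0 - 9n) - v_p(y_0 + 9n)$ is divisible by $3$.

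Fix such a prime $p$ and set $a = v_p(y_0 - 9n)$, $b = v_p(y_0 + 9n)$. First I would handle the odd case $p \neq 2, 3$. If $a$ and $b$ were both positive, then $p$ would divide the sum $2 y_0$ and the difference $18n$; since $p \neq 2, 3$, this forces $p \mid n$, against the hypothesis. Hence $\min(a,b) = 0$, and then from $a + b = v_p(4 x_0^3) = 3 v_p(x_0)$ I get that the nonzero one of $a, b$ is already a multiple of $3$, so $a - b \equiv 0 \pmod 3$, as required.

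The case $p = 2$ is the main obstacle and requires more care. Assuming $n$ is odd, reduction of $y_0^2 = 4 x_0^3 + 81 n^2$ modulo $2$ forces $y_0$ to be odd as well; hence both $y_0 \pm 9 n$ are even, so $a, b \geq 1$. On the other hand, $(y_0 - 9n) + (y_0 + 9n) = 2 y_0$ with $y_0$ odd means $v_2(2 y_0) = 1$, which prevents both $a$ and $b$ from being $\geq 2$ simultaneously. Therefore $\min(a,b) = 1$, and from $a + b = v_2(4 x_0^3) = 2 + 3 v_2(x_0)$ the other valuation equals $1 + 3 v_2(x_0)$, giving $|a - b| = 3 v_2(x_0) \equiv 0 \pmod 3$.

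Putting the two cases together, for every prime $p \neq 3$ with $p \nmid n$ the exponent of $p$ in $(y_0-9n)/(y_0+9n)$ lies in $3\ZZ$, so $p \nmid m$ by the definition of $m$ recalled in the remark preceding the proposition. Contrapositively, any prime factor of $m$ other than $3$ must divide $n$, which is the desired statement. I expect the only genuinely delicate point to be the $p = 2$ analysis, and in particular the observation that $n$ odd forces $y_0$ odd via the curve equation, which is what makes the parity argument go through.
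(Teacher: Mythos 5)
Your overall strategy---factoring the curve equation as $(y_0-9n)(y_0+9n)=4x_0^3$ and comparing $p$-adic valuations prime by prime, with $p=2$ treated separately---is the same as the paper's. The gap is that several of your steps tacitly assume $x_0,y_0$ are $p$-integral, whereas $P=(x_0,y_0)$ is only a rational point on $E^{-27D}$ and its coordinates may have $p$ in the denominator (this is exactly why the paper writes $x_0=a/b$, $y_0=c/d$ in lowest terms and argues with the integers $c\pm 9nd$). Concretely, in the odd case you rule out $a,b>0$ and conclude ``hence $\min(a,b)=0$''; but excluding that both are positive only gives $\min(a,b)\le 0$, and if $v_p(y_0)<0$ one has $a=b=v_p(y_0)<0$, so that step as written is false. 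Likewise, for $p=2$ the sentence ``reduction of $y_0^2=4x_0^3+81n^2$ modulo $2$ forces $y_0$ to be odd'' is meaningless when $v_2(y_0)<0$, a possibility not ruled out a priori for rational points on this model.

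The gap is easy to close within your framework: if $v_p(y_0)<0$, then $v_p(9n)\ge 0>v_p(y_0)$, so $a=v_p(y_0-9n)=v_p(y_0)=v_p(y_0+9n)=b$, the exponent of $p$ in $(y_0-9n)/(y_0+9n)$ is $0$, and $p\nmid m$ trivially; in the remaining case $v_p(y_0)\ge 0$ your two arguments do go through (for $p=2$ note that $a,b\ge 1$ together with $a+b=2+3v_2(x_0)$ already forces $v_2(x_0)\ge 0$, so no separate integrality check on $x_0$ is needed). With that extra case added, your argument is correct and is essentially the paper's proof recast in valuation-theoretic language rather than with cleared denominators.
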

\begin{proof}
    Write $x_0 = a/b$ and $y_0 = c/d$, $(a,b) = 1$ and $(c,d) = 1$. Then,
    $$
    \frac{y_0 - 9n}{y_0 + 9n} = \frac{c - 9nd}{c + 9nd}.
    $$
    Write $\frac{c - 9nd}{c + 9nd} = p_1^{e_1}\cdots p_r^{e_r}$. Suppose $p_i \nmid n$. We distinguish two cases: 
    \begin{itemize}
        \item If $p_i = 2$, suppose $2$ divides either $c - 9nd$ or $c + 9nd$. If $2 | d$, then $0 \equiv c \pm 9nd \equiv c$ modulo 2, so $c$ and $d$ would have 2 as a common factor. Thus, $2 \nmid c, d$, so 
        $$
        1 = \nu_2(18nd) = \nu_2(c + 9nd - (c - 9nd)) = \min\{\nu_2(c + 9nd), \nu_2(c - 9nd)\}.
        $$
        Since $(x_0, y_0)$ satisfies $y_0^2 = 4x_0^3 - 27D$, we have $(c - 9nd)(c + 9nd)b^3 = 4a^3d^2$. This means
        $$
        1 + \max\{\nu_2(c + 9nd), \nu_2(c - 9nd)\} + 3\nu_2(b) = 3\nu_2(a) + 2,
        $$
        which implies that $e_i = \nu_2\lpar{\frac{y_0 - 9n}{y_0 + 9n}} \equiv 0$ modulo 3. Therefore, $2 \nmid m$.
        \item Otherwise, suppose $p_i | c - 9nd$. Then, $p_i\nmid c + 9nd$, since the contrary would imply $c \equiv 9nd \equiv 0$ modulo $p_i$. This means 
        $$
        \nu_{p_i}(c - 9nd) + 3\nu_{p_i}(b) = 3\nu_{p_i}(a),
        $$
        so $e_i \equiv 0$ modulo 3. The same argument applies when $p_i|c + 9nd$.
    \end{itemize}
    Thus, in any case, if $p_i \nmid n$, then $p_i\nmid m$. This means any $p_i$ dividing $m$ will also divide $n$. 
\end{proof}

\subsection{Dedekind type II fields}
\label{section: Dedekind II}
Suppose $3 \nmid n$ (for example, when $D$ is the discriminant of a Dedekind type II field). Remark (\ref{remark: Trivially mono}) states that $L_{(0, \sqrt{-27D})} = \QQ(\sqrt[3]{D})$, which satisfies $3|D$, so the condition ``different from 3" in the above proposition cannot be improved in general. Apart from $L_{(0, \sqrt{-27D})}$, there are other instances when $3|m$.
\begin{example}
    Let $D = -300 = -3\cdot(10)^2$. A computation in Sage \cite{sagemath} shows that the free part of the Mordell-Weil group of $E^{-27D}/\QQ$ is generated by $P = (-9, 72)$. The primitive element of $L_P$ given by proposition (\ref{theorem: primitive element}) is $-1/9$, so $L_P = \QQ(\sqrt[3]{3})$.
\end{example}

Let $p_0 = 3$ and $p_1,\dots, p_s$ the prime factors of $n$. Proposition (\ref{proposition: factors of n}) implies that we can write
\begin{equation}
    \label{equation: Cocycle expression}
    \xi_P = (\overline{\lambda}_0f_0 + \overline{\lambda}_1f_1 + \cdots + \overline{\lambda}_{s}f_{s})P_D, \quad \textnormal{where }f_i: \sigma \mapsto \log_{\zeta_3}\lpar{\frac{\sigma(\sqrt[3]{p_i})}{\sqrt[3]{p_i}}},
\end{equation}
for some $\overline{\lambda}_i \in \mathbb{F}_3$, $i = 0, \dots, s$. 
\begin{lemma}
    \label{lemma: unicity of coefficients}
    Suppose $\xi = (\sum_{i = 0}^{s}\overline{\lambda}'_if_i)P_D$ is another cocycle which yields the same class in $S^{\phi_D}(E^D/\QQ)$ as $\xi_{P}$. Then, $\overline{\lambda}'_i = \overline{\lambda}_i$ for $i = 0,\dots, s$.
\end{lemma}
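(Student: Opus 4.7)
The plan is to show that if $\eta := \sum_{i=0}^{s} \overline{\mu}_i f_i \cdot P_D$ with $\overline{\mu}_i := \overline{\lambda}'_i - \overline{\lambda}_i$ is a coboundary in $Z^1(G_\QQ, E^D[\phi_D])$, then all $\overline{\mu}_i$ vanish. The key observation is that $P_D = (0,\sqrt{D}) = (0, n\sqrt{-3})$, and therefore the whole subgroup $E^D[\phi_D] = \langle P_D\rangle$, is defined over $\QQ(\sqrt{-3})$. Consequently, the subgroup $H := \gal(\overline{\QQ}/\QQ(\sqrt{-3}))$ acts trivially on $E^D[\phi_D]$, so every coboundary $\sigma \mapsto \sigma R - R$ restricts to the zero function on $H$.

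The first step is to restrict $\eta$ to $H$. Identifying $E^D[\phi_D] \simeq \FF_3$ via $P_D \mapsto 1$, the restriction $\eta|_H$ is the homomorphism $\sum_i \overline{\mu}_i (f_i|_H) : H \to \FF_3$. The coboundary hypothesis forces this homomorphism to be identically zero.

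The second step is to show that $f_0|_H, \ldots, f_s|_H$ are $\FF_3$-linearly independent in $\hom_{\textnormal{cts}}(H, \FF_3)$. Since $\zeta_3 \in \QQ(\sqrt{-3})$, the Kummer isomorphism identifies this $\hom$-group with $\QQ(\sqrt{-3})^\times / (\QQ(\sqrt{-3})^\times)^3$ and sends $f_i|_H$ to the class of $p_i$. The desired independence therefore reduces to showing that
$$
3^{\overline{\mu}_0} \cdot p_1^{\overline{\mu}_1} \cdots p_s^{\overline{\mu}_s} \in \bigl(\QQ(\sqrt{-3})^\times\bigr)^3
$$
forces all $\overline{\mu}_i \equiv 0 \pmod{3}$. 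I would prove this by computing valuations in $\QQ(\sqrt{-3})$: the prime $3$ ramifies as $(3) = (\sqrt{-3})^2$, while each $p_i \neq 3$ is unramified. Applying $\nu_{(\sqrt{-3})}$ to the cube relation gives $2\overline{\mu}_0 \equiv 0 \pmod{3}$, forcing $\overline{\mu}_0 = 0$. Then for each $i \geq 1$, applying the valuation at any prime of $\QQ(\sqrt{-3})$ above $p_i$ (coprime to the primes above every other $p_j$, regardless of whether $p_i$ splits or is inert) gives $\overline{\mu}_i \equiv 0 \pmod{3}$.

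The main delicate point is the separate treatment of $p_0 = 3$: the factor of $2$ coming from its ramification index is precisely what makes $2\overline{\mu}_0 \equiv 0 \pmod{3}$ equivalent to $\overline{\mu}_0 \equiv 0 \pmod{3}$. The rest of the Kummer-independence argument is uniform across the remaining primes regardless of their individual splitting behaviour in $\QQ(\sqrt{-3})$.
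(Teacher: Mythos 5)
Your proof is correct, but it follows a genuinely different route from the paper's. The paper exploits that $E^D[\phi_D]$ has order $3$, so the only nonzero coboundaries are $\pm\delta_{P_D}\colon\sigma\mapsto\bigl(\tfrac{\sigma(\sqrt{-3})}{\sqrt{-3}}-1\bigr)P_D$; assuming $\xi-\xi_P\neq 0$, it identifies $\xi-\xi_P$ with $\pm\delta_{P_D}$ and then evaluates both at a single well-chosen $\tau$ (fixing $\sqrt[3]{D'}$ for $D'=\prod p_i^{\lambda'_i-\lambda_i}$ but negating $\sqrt{-3}$) to reach a contradiction, concluding the difference cocycle vanishes identically; the final passage from ``the cocycle is zero'' to ``all coefficients are zero'' (independence of the $f_i$) is left implicit. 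You instead restrict to $H=\gal(\overline{\QQ}/\QQ(\sqrt{-3}))$, where the module is fixed pointwise so \emph{all} coboundaries die at once, and then you prove outright that $f_0|_H,\dots,f_s|_H$ are $\FF_3$-independent via Kummer theory over $\QQ(\zeta_3)$ and a valuation computation, with the ramified prime $(\sqrt{-3})$ handling $p_0=3$ (the factor $2$ being invertible mod $3$) and unramified primes above each $p_i$ handling the rest. Your approach is slightly longer but more systematic: it avoids having to enumerate the coboundaries and, importantly, it makes explicit the multiplicative-independence step that the paper's ``or, equivalently'' glosses over; the paper's argument buys brevity through one clever evaluation point. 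Two cosmetic cautions: the Kummer map you invoke is the isomorphism $\QQ(\sqrt{-3})^\times/(\QQ(\sqrt{-3})^\times)^3\simeq\hom_{\mathrm{cts}}(H,\mu_3)$, identified with $\hom_{\mathrm{cts}}(H,\FF_3)$ only after fixing $\zeta_3\mapsto 1$ (you do note $\zeta_3\in\QQ(\sqrt{-3})$, and only injectivity of the Kummer map is actually needed); and your argument tacitly uses that $3,p_1,\dots,p_s$ are pairwise distinct, which holds here because the lemma sits in the Dedekind type II setting where $3\nmid n$.
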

\begin{proof}
    Under these hypotheses, $\xi - \xi_P$ is a coboundary. Suppose it is not 0. Then, it is equal, up to sign, to the cocycle 
    $$
    \delta_{P_D} : \sigma \mapsto \lpar{\frac{\sigma(\sqrt{-3})}{\sqrt{-3}} - 1}P_D.
    $$
    Let $D' = p_0^{\lambda'_0 - \lambda_0}\cdots p_s^{\lambda'_s - \lambda_s}$. Since $\xi - \xi_P$ is not zero, $D'$ is different from 1. By construction, $\xi - \xi_P$ is equal to 
    $$
    \sigma \mapsto \log_{\mu_3}\lpar{\frac{\sigma(\sqrt[3]{D'})}{\sqrt[3]{D'}}}P_D.
    $$
    Let $\tau \in G_{\QQ}$ such that $\tau(\sqrt[3]{D'}) = \sqrt[3]{D}$ and $\tau(\sqrt{-3}) = -\sqrt{-3}$. Then, the expression for $\xi - \xi_P$ shows that $(\xi - \xi_P)(\tau) = 0$, whereas $\delta_{P_D}(\tau)\neq 0$. Hence, $\xi - \xi_P$ has to be 0 or, equivalently, $\overline{\lambda}'_i = \overline{\lambda}_i$ for $i = 0,\dots, s$.
\end{proof}

Given generators $P_0 = (0, \sqrt{-27D}), P_1, \dots, P_{r}$ of $E^{-27D}(\QQ)$, where $r$ is the rank of $E^{-27D}(\QQ)$, we construct a matrix $\mathcal{M}$ as follows: we denote $\xi_{P_i} = (\sum_{j = 0}^{s}\overline{\lambda}_j^if_i)(0:\sqrt{D}:1)$. Then, the $i$-th row of $\mathcal{M}$ is $\overline{\lambda}^i := (\overline{\lambda}_0^i, \dots, \overline{\lambda}_{s}^i)$. Thus, $\mathcal{M}$ is an $(r + 1)\times(s + 1)$ matrix over $\mathbb{F}_3$. For $\overline{\lambda} \in \mathbb{F}_3$, let $\lambda$ denote the lift of $\overline{\lambda}$ to $\{0,1,2\} \subset \ZZ$. 
\begin{theorem}
    \label{theorem: Iso Dedekind II}
    $E^{-27D}(\QQ)/\phi_D(E^D(\QQ))$ is isomorphic, as an $\FF_3$-vector space, to the vector space $V$ generated by the rows of $\mathcal{M}$. Moreover, there is a bijection
    \begin{align*}
        \left\{\parbox{18em}{$\scprod{(\overline{\lambda}_0, \dots, \overline{\lambda}_{s})}\subseteq V \ | \ \overline{\lambda}_0 = 0, \overline{\lambda}_i \neq 0$ \textnormal{for} $i = 1, \dots, s \textnormal{ and }p_1^{\lambda_1}\cdots p_{s}^{\lambda_{s}}\equiv\pm 1\mod 9$}\right\}&\longleftrightarrow
        \left\{\parbox{10em}{\textnormal{Quasi-monogenic cubic number fields of\\ discriminant }D}\right\}\\
        \scprod{(\overline{\lambda}_0, \dots, \overline{\lambda}_{s})}&\longmapsto\QQ\lpar{\sqrt[3]{p_1^{\lambda_1}\cdots p_{s}^{\lambda_{s}}}}
    \end{align*}   
\end{theorem}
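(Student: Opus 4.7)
The theorem has two parts: (a) an $\FF_3$-linear isomorphism $E^{-27D}(\QQ)/\phi_D(E^D(\QQ)) \cong V$, and (b) a bijection between certain orbits in $V$ and quasi-monogenic pure cubic fields of discriminant $D$. The plan is to handle (a) by concatenating the injection from the Selmer--Tate-Shafarevich sequence with the well-defined coefficient extraction given by Proposition \ref{proposition: factors of n} and Lemma \ref{lemma: unicity of coefficients}, and then handle (b) by combining Theorem \ref{theorem: characterization of quasi-monogenity} with Proposition \ref{theorem: primitive element} to identify the cubic field associated to each orbit, checking the discriminant condition by the standard Dedekind II formula.

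\emph{Part (a).} The exact sequence (\ref{equation: Selmer-TS sequence}) supplies an injection $\delta: E^{-27D}(\QQ)/\phi_D(E^D(\QQ)) \hookrightarrow S^{\phi_D}(E^D/\QQ)$. By Lemma \ref{lemma: explicit cocycle} together with Proposition \ref{proposition: factors of n}, for every $P \in E^{-27D}(\QQ)$ the class $\delta([P]) = [\xi_P]$ admits a representation of the form $(\sum_{i=0}^{s} \overline{\lambda}_i f_i) P_D$, and by Lemma \ref{lemma: unicity of coefficients} those $\FF_3$-coefficients are intrinsic to the cohomology class. The composite map $[P] \mapsto \overline{\lambda}^P \in \FF_3^{s+1}$ is therefore a well-defined $\FF_3$-linear injection; since $P \mapsto \xi_P$ is additive in cohomology, the image equals the span of the rows $\overline{\lambda}^i = \overline{\lambda}^{P_i}$ associated to the chosen generating set, which is by definition $V$.

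\emph{Part (b).} Given an orbit $\langle\overline{\lambda}\rangle = \langle(\overline{\lambda}_0, \ldots, \overline{\lambda}_s)\rangle \subseteq V$ satisfying the stated conditions, pull it back via (a) to $\langle[P]\rangle$ in $E^{-27D}(\QQ)/\phi_D(E^D(\QQ))$. Theorem \ref{theorem: characterization of quasi-monogenity} combined with Proposition \ref{theorem: primitive element} and the remark after it yields $L_P = \QQ(\sqrt[3]{m})$ where, modulo cubes, $m = p_0^{\lambda_0} p_1^{\lambda_1}\cdots p_s^{\lambda_s}$; the hypothesis $\overline{\lambda}_0 = 0$ reduces this to $m = p_1^{\lambda_1}\cdots p_s^{\lambda_s}$, and replacing $\overline{\lambda}$ by $2\overline{\lambda}$ replaces $m$ by $m^2$, which defines the same field, so the assignment is well-defined on orbits. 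Writing $m = hk^2$ in coprime squarefree form, $h$ is the product of those $p_i$ with $\lambda_i = 1$ and $k$ the product of those with $\lambda_i = 2$; since every $\overline{\lambda}_i$ with $i \geq 1$ is nonzero, we obtain $hk = p_1\cdots p_s = n$. Together with $m \equiv \pm 1 \pmod 9$ this places $L_P$ in the Dedekind II regime, so $\disc(L_P) = -3(hk)^2 = D$, and quasi-monogenity is furnished by Lemma \ref{lemma: explicit cocycle}. Injectivity of the map of orbits follows from $\QQ(\sqrt[3]{a}) = \QQ(\sqrt[3]{b})$ iff $a \equiv b^{\pm 1}$ in $\QQ^\times/(\QQ^\times)^3$, which at the level of exponent tuples is exactly the relation of generating the same $\FF_3$-orbit. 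Surjectivity: any quasi-monogenic pure cubic $L$ with $\disc(L) = D$ is, by Theorem \ref{theorem: characterization of quasi-monogenity}, $L \cong L_P$ for some $P$, and the discriminant constraint forces the orbit of $[P]$ in $V$ to satisfy the listed conditions.

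\emph{Anticipated difficulty.} The delicate bookkeeping is the sign ambiguity: the class $\alpha_L$ (resp.\ $[\xi_P]$) is only determined up to $\pm$, and this ambiguity must be absorbed exactly by passing to $\FF_3$-orbits on both sides of the correspondence. A second subtlety is that the discriminant computation implicitly requires $n$ to be squarefree; in the non-squarefree case one must verify, using Proposition \ref{proposition: factors of n} and the Dedekind II formula above, that no orbit in $V$ can simultaneously satisfy $\overline{\lambda}_0 = 0$ and $\overline{\lambda}_i \neq 0$ for all $i \geq 1$ with $m \equiv \pm 1 \pmod 9$, so that both sides of the bijection are empty.
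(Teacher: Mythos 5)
Your proposal is correct and follows essentially the same route as the paper: part (a) is exactly the paper's composition of $\delta$ with the coefficient extraction of (\ref{equation: Cocycle expression}), made well defined by Lemma \ref{lemma: unicity of coefficients}, and part (b) merely spells out, via Proposition \ref{theorem: primitive element}, Lemma \ref{lemma: explicit cocycle} and the Dedekind type II discriminant formula, the correspondence the paper asserts in one line. The squarefreeness caveat you flag is likewise left implicit in the paper, which only applies the theorem to genuine Dedekind type II discriminants (where $n$ is automatically squarefree), so no further work is required.
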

\begin{proof}
    The map $\delta$ in the exact sequence (\ref{equation: Selmer-TS sequence}) defines an isomorphism of $\mathbb{F}_3$-vector spaces $E^{-27D}(\QQ)/\phi_D(E^D(\QQ))\to \im(\delta)$. We define a map $\im(\delta) \to V$ as follows: to any $\xi_P \in \im(\delta)$, assign the vector $(\overline{\lambda}_0, \dots, \overline{\lambda_s}) \in V$, where $\overline{\lambda}_i$ are the coefficients described in (\ref{equation: Cocycle expression}). This map is well defined by lemma (\ref{lemma: unicity of coefficients}) and it follows from formula (\ref{equation: Cocycle expression}) that it is an isomorphism of $\mathbb{F}_3$-vector spaces.
    
    The second part of the theorem is a direct consequence of the fact that the conditions imposed on the $\overline{\lambda}_i$ imply that the resulting cocycle is associated to a quasi-monogenic Dedekind type II field with discriminant $D$. Conversely, the cocycle associated to such a field satisfies the conditions described above (note that the conditions on $D$ force any cubic number field of discriminant $D$ to be of Dedekind type II).  
\end{proof}

The reduced row echelon form of $\mathcal{M}$ is, up to permutations of the last $s$ columns, 
\begin{equation}
    \label{equation: reduced row echelon form}
    \mathcal{M}_{red} = \left(
    \begin{array}{c|c}
        \id_\rho&*\\\hline
        0&0
    \end{array}
    \right),
\end{equation}
where $\rho = \dim_{\mathbb{F}_3}E^{-27D}(\QQ)/\phi_D(E^D(\QQ))$. By theorem (\ref{theorem: Iso Dedekind II}), if a vector $v$ in $V$ corresponds to a quasi-monogenic Dedekind type II field, then it is of the form $\sum_{i = 0}^s\mu_i\overline{\lambda}'^i$, where $\overline{\lambda}'^i$ is the $i$-th row of $\mathcal{M}_{red}$, $\mu_0 = 0$ and $\mu_i \neq 0$ for $i = 1, \dots, s$. Taking into account that both $v$ and $-v$ determine the same field, we have the following corollary. 
\begin{corollary}
    The number of quasi-monogenic Dedekind type II fields of discriminant $D$ is bounded by $2^{\rho-2}$. \qed
\end{corollary}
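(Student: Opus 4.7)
\emph{Proof proposal.} My plan is to read the bound directly off the pivot structure of $\mathcal{M}_{red}$ together with the bijection from Theorem~\ref{theorem: Iso Dedekind II}. The first step I would carry out is to verify that column~$0$ of $\mathcal{M}$—the column associated to the prime $p_0 = 3$—is a pivot column after row reduction. This should follow because the first row of $\mathcal{M}$ comes from $P_0 = (0,\sqrt{-27D})$; by Remark~\ref{remark: Trivially mono} we have $L_{P_0} = \QQ(\sqrt[3]{-3n^2})$, and since $3 \nmid n$ in the Dedekind type~II setting, the factor $3$ appears to the first power in the cube-free representative, forcing $\overline{\lambda}_0^0 \neq 0$. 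Once column~$0$ is shown to be a pivot, the permutation allowed in the stated form of $\mathcal{M}_{red}$ lets us assume that the $\id_\rho$ block begins with column~$0$.

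With this pivot structure in hand, I would parameterize $V$ by writing every $v \in V$ uniquely as $v = \sum_{i=0}^{\rho-1} \mu_i \overline{\lambda}'^i$ with $\mu_i \in \FF_3$, noting that in each of the $\rho$ pivot columns the corresponding entry of $v$ equals the matching $\mu_i$. Theorem~\ref{theorem: Iso Dedekind II} requires a vector representing a quasi-monogenic Dedekind type~II field to satisfy $v_0 = 0$ and $v_j \neq 0$ for all $j = 1, \ldots, s$. The first condition forces $\mu_0 = 0$; the second, restricted to the pivot columns, forces $\mu_i \neq 0$ for each of the remaining $\rho-1$ pivot coefficients, so each such $\mu_i$ lies in $\{1,2\} \subset \FF_3$. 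This yields at most $2^{\rho-1}$ candidate vectors.

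To finish, I would use the fact that the bijection of Theorem~\ref{theorem: Iso Dedekind II} identifies fields with one-dimensional subspaces $\langle v \rangle$ of $V$: the vectors $v$ and $-v$ determine the same cube-free integer $p_1^{\lambda_1}\cdots p_s^{\lambda_s}$ modulo cubes, hence the same field. Pairing off accordingly, the $2^{\rho-1}$ candidates produce at most $2^{\rho-1}/2 = 2^{\rho-2}$ fields. The remaining hypotheses—nonvanishing of $v_j$ at the non-pivot indices $j \geq \rho$ and the congruence $p_1^{\lambda_1}\cdots p_s^{\lambda_s} \equiv \pm 1 \pmod 9$—can only tighten the count, so they are irrelevant for an upper bound. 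The one step that is not formal linear algebra is the verification that column~$0$ of $\mathcal{M}$ is nonzero; this is where Remark~\ref{remark: Trivially mono} and the hypothesis $3 \nmid n$ enter essentially, and it is the only place where the counting needs genuine arithmetic input.
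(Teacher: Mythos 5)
Your proposal is correct and follows essentially the same route as the paper: read the conditions $\overline{\lambda}_0=0$ and $\overline{\lambda}_i\neq 0$ off the pivot columns of $\mathcal{M}_{red}$ to get at most $2^{\rho-1}$ candidate vectors, then halve because $v$ and $-v$ span the same line and hence give the same field. The only difference is that you explicitly verify, via Remark~\ref{remark: Trivially mono} and $3\nmid n$, that column $0$ is a pivot column, a step the paper leaves implicit in asserting the form of $\mathcal{M}_{red}$ up to permutation of the last $s$ columns.
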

\begin{remark}
    Note that $\rho \leq r + 1$, so $2^{r - 1}$ is also a bound for the total number of quasi-monogenic Dedekind type II fields of discriminant $D$.
\end{remark}

\subsection{Dedekind type I fields}
\label{subsection: Main Dedekind type I results}
Now, suppose $3|n$ (for example, when $D$ is the discriminant of a Dedekind type I field). Write $D = -27n'^2$. In this case, we have an improvement of (\ref{proposition: factors of n}).

\begin{proposition}
    \label{proposition: factors of n Dedekind I}
    Any prime factor of $m$ divides $n'$.
\end{proposition}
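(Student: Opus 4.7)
The plan is to combine Proposition \ref{proposition: factors of n} with a $3$-adic refinement at the only prime it leaves out. Write $n = 3n'$, so that $9n = 27n'$ and $D = -27n'^2$. Proposition \ref{proposition: factors of n} already guarantees that every prime $p\neq 3$ dividing $m$ divides $n = 3n'$, and since $p\neq 3$, this forces $p\mid n'$. Thus the only case not yet covered is $p = 3$, where it suffices to prove the contrapositive: if $3 \nmid n'$, then $\nu_3\bigl((y_0 - 27n')/(y_0 + 27n')\bigr)$ is a multiple of $3$, so $3\nmid m$.

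Following the template of the proof of Proposition \ref{proposition: factors of n}, I would write $x_0 = a/b$ and $y_0 = c/d$ with $\gcd(a,b) = \gcd(c,d) = 1$. The defining relation $y_0^2 = 4x_0^3 - 27D = 4x_0^3 + 81n'^2$ factors as $(y_0 - 27n')(y_0 + 27n') = 4x_0^3$ and, after clearing denominators, yields the integral identity
$$
(c - 27n'd)(c + 27n'd)\,b^3 \;=\; 4a^3 d^2.
$$
Applying $\nu_3$ to both sides gives $\nu_3(c - 27n'd) + \nu_3(c + 27n'd) + 3\nu_3(b) = 3\nu_3(a)$, so the sum of the two interesting valuations is a multiple of $3$.

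Now I split on $\nu_3(c)$. If $3 \nmid c$, then $c \pm 27n'd \equiv c \not\equiv 0 \pmod 3$ and both factors have $\nu_3 = 0$, so the ratio does too. If $3 \mid c$, then $\gcd(c,d) = 1$ forces $3 \nmid d$, and the assumption $3 \nmid n'$ gives $\nu_3(27n'd) = 3$. For $\nu_3(c) \neq 3$ the ultrametric inequality is strict at both factors, so $\nu_3(c - 27n'd) = \nu_3(c + 27n'd) = \min\{\nu_3(c),3\}$ and the ratio again has $\nu_3 = 0$. The subtle subcase is $\nu_3(c) = 3$: here $\nu_3\bigl((c + 27n'd) - (c - 27n'd)\bigr) = \nu_3(54n'd) = 3$, which forces $\min\{\nu_3(c - 27n'd), \nu_3(c + 27n'd)\} = 3$. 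Substituting this into the displayed identity makes the other valuation a multiple of $3$, hence their difference, namely $\nu_3$ of the ratio, is divisible by $3$.

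The main obstacle is precisely that last subcase; the ultrametric comparison by itself is not enough, and the defining equation of $E^{-27D}$ must be invoked to promote the equality of valuations to a congruence modulo $3$. The remainder is routine bookkeeping that mirrors the $p = 2$ step in the proof of Proposition \ref{proposition: factors of n}.
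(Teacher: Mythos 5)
Your proof is correct and follows essentially the same route as the paper: reduce to the prime $3$ via Proposition \ref{proposition: factors of n}, clear denominators to get $(c - 27n'd)(c + 27n'd)b^3 = 4a^3d^2$, and combine the ultrametric bound $\nu_3(54n'd)=3$ with this identity to force $\nu_3\bigl((y_0-9n)/(y_0+9n)\bigr)\equiv 0 \pmod 3$. Your explicit case split on $\nu_3(c)$ just spells out what the paper handles by assuming $3$ occurs in the factorization; the substance is identical.
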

\begin{proof}
    In view of (\ref{proposition: factors of n}), we just have to prove that if $3|m$, then $3|n'$. Suppose $3\nmid n'$ and that 3 occurs in the factorization of $(y_0 - 9n)/(y_0 + 9n)$. With the same notation as in the proof of (\ref{proposition: factors of n}), we have $(c - 27n'd)(c + 27n'd)b^3 = 4a^3d^2$. Necessarily, $3|c$, so $3\nmid d$. This time, we have $3 = \nu_3(54n'd) = \min\{\nu_3(c + 27n'd), \nu_3(c - 27n'd)\}$, so 
    $$
    3 + \max\{\nu_3(c + 27n'd), \nu_3(c - 27n'd)\} + 3\nu_3(b) = 3\nu_3(a).
    $$
    This implies that $\nu_3((y_0 - 9n)/(y_0 + 9n))\equiv 0$ modulo 3, so 3 does not occur in the factorization of $m$.
\end{proof}
Now, proposition (\ref{proposition: factors of n Dedekind I}) implies that we can write
$$
\xi_P = (\overline{\lambda}_1f_1 + \cdots + \overline{\lambda}_{s}f_{s})P_D.
$$
Given generators $P_1, \dots, P_r$ for the free part of $E^{-27D}(\QQ)$, we construct a matrix $\mathcal{M}$ as follows: denote $\xi_{P_i} = (\sum_{j = 0}^{s}\overline{\lambda}_j^if_j)P_D$. Then, the $i$-th row of $\mathcal{M}$ is $\overline{\lambda}_i := (\overline{\lambda}_1^i, \dots, \overline{\lambda}_{s}^i)$. This time, $\mathcal{M}$ is an $(r + 1)\times s$ matrix over $\mathbb{F}_3$. Applying the same arguments as in theorem (\ref{theorem: Iso Dedekind II}), we obtain the following result. 
\begin{theorem}
    \label{theorem: Iso Dedekind I}
    $E^{-27D}(\QQ)/\phi_D(E^D(\QQ))$ is isomorphic, as an $\FF_3$-vector space, to the vector space $V$ generated by the rows of $\mathcal{M}$. Moreover, there is a bijection
    \begin{align*}
        \pushQED{\qed}
        \left\{\parbox{18em}{$\scprod{(\overline{\lambda}_1, \dots, \overline{\lambda}_{s})}\subseteq V \ | \ \overline{\lambda}_i \neq 0$ \textnormal{for} $i = 1, \dots, s \textnormal{ and }p_1^{\lambda_1}\cdots p_{s}^{\lambda_{s}}\not\equiv\pm 1\mod 9$}\right\}&\longleftrightarrow
        \left\{\parbox{10em}{\textnormal{Quasi-monogenic cubic number fields of\\ discriminant }D}\right\}\\
        \scprod{(\overline{\lambda}_0, \dots, \overline{\lambda}_{s})}&\longmapsto\QQ\lpar{\sqrt[3]{p_1^{\lambda_1}\cdots p_{s}^{\lambda_{s}}}}\qedhere
        \popQED
    \end{align*}
\end{theorem}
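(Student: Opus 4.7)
The plan is to imitate the proof of Theorem \ref{theorem: Iso Dedekind II}, profiting from Proposition \ref{proposition: factors of n Dedekind I} which ensures that only the primes $p_1, \ldots, p_s$ dividing $n'$ contribute to the cocycle $\xi_P$, so the index $j=0$ is simply absent.

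First I would prove the analog of Lemma \ref{lemma: unicity of coefficients} in the present setting: two cocycles of the form $(\sum_{i=1}^{s}\overline{\lambda}_i f_i)P_D$ and $(\sum_{i=1}^{s}\overline{\lambda}'_i f_i)P_D$ yield the same class in $S^{\phi_D}(E^D/\QQ)$ only when the coefficients agree. The strategy transfers verbatim from the type II case: assuming the difference is a nonzero coboundary (necessarily a multiple of $\delta_{P_D}$), one constructs a Galois element $\tau$ fixing $\sqrt[3]{D'}$, with $D' = \prod p_i^{\lambda_i-\lambda'_i}$, but flipping $\sqrt{-3}$, and derives a contradiction by evaluating both sides at $\tau$. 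The existence of $\tau$ uses only that $\QQ(\sqrt[3]{D'})$ is totally real and hence cannot contain $\sqrt{-3}$.

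With uniqueness in hand, the isomorphism is built by combining the injection $\delta: E^{-27D}(\QQ)/\phi_D(E^D(\QQ))\hookrightarrow S^{\phi_D}(E^D/\QQ)$ from (\ref{equation: Selmer-TS sequence}) with the well-defined $\FF_3$-linear map $[\xi_P]\mapsto (\overline{\lambda}_1,\ldots,\overline{\lambda}_s)$ that picks out the coefficients. By construction the rows of $\mathcal{M}$ lie in the image, so the composite maps onto $V$; by Proposition \ref{proposition: factors of n Dedekind I} the image is contained in $V$. For the bijection, I would invoke Theorem \ref{theorem: characterization of quasi-monogenity} to match nonzero $\FF_3$-orbits on $\im(\delta)$ whose associated field has discriminant exactly $D$ with quasi-monogenic cubic fields of discriminant $D$. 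Transporting the correspondence via Proposition \ref{theorem: primitive element} and the remark after it identifies each such orbit with $\QQ(\sqrt[3]{p_1^{\lambda_1}\cdots p_s^{\lambda_s}})$; the conditions $\overline{\lambda}_i \neq 0$ and $p_1^{\lambda_1}\cdots p_s^{\lambda_s}\not\equiv \pm 1 \pmod 9$ are then exactly what is needed for this field to (a) have a radicand involving every prime dividing $n'$, and (b) fall into the Dedekind type I class, so that its discriminant is exactly $D = -27n'^2$ rather than a proper divisor or a type II discriminant $-3n'^2$.

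The main obstacle is the uniqueness lemma, just as in the type II case: one must rule out that a nontrivial $\FF_3$-combination of the $f_i$, paired with $P_D$, coincides with the unique nontrivial coboundary $\delta_{P_D}$. Once that rigidity is secured, every other step is a formal adaptation of the diagram-chase and discriminant bookkeeping that powered Theorem \ref{theorem: Iso Dedekind II}.
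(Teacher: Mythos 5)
Your proposal is correct and takes essentially the same route as the paper, whose proof is literally ``apply the same arguments as in Theorem \ref{theorem: Iso Dedekind II}'': the isomorphism is obtained from $\delta$ composed with the coefficient-extraction map (well defined by the analogue of Lemma \ref{lemma: unicity of coefficients}, whose proof transfers verbatim), and the bijection from checking that the conditions on the $\overline{\lambda}_i$ characterize exactly the cocycles of quasi-monogenic fields of discriminant $D$. Your explicit re-justification of the uniqueness lemma without the $f_0$ term and of why the two conditions force the discriminant to be exactly $-27n'^2$ is precisely the bookkeeping the paper leaves implicit.
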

Now, consider the reduced row echelon form of $\mathcal{M}$ as in (\ref{equation: reduced row echelon form}). If a vector $v$ is associated to a quasi-monogenic Dedekind type I field with discriminant $D$, then it is of the form $\sum_{j = 0}^s\mu_j\overline{\lambda}'^i$, where $\overline{\lambda}'^i$ is the $i$-th row of $\mathcal{M}_{red}$ and $\mu_j \neq 0$ for $j = 1,\dots, s$. Thus, we get the following corollary.
\begin{corollary}
    The number of quasi-monogenic Dedekind type I fields of discriminant $D$ is bounded by $2^{\rho - 1}$. \qed
\end{corollary}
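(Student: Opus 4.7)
The plan is to read off the bound directly from Theorem \ref{theorem: Iso Dedekind I} combined with the shape of the reduced row echelon form $\mathcal{M}_{red}$ introduced just before the corollary. First I would use the bijection of that theorem to reduce the problem to counting $\FF_3$-lines $\langle v\rangle \subseteq V$ whose representative $v=(\overline{\lambda}_1,\dots,\overline{\lambda}_s)$ has all coordinates nonzero. Since the further congruence condition $p_1^{\lambda_1}\cdots p_s^{\lambda_s}\not\equiv \pm 1 \pmod 9$ can only cut the count down, dropping it is harmless for the purpose of an upper bound.

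Next I would exploit the identity block in $\mathcal{M}_{red}=\left(\begin{smallmatrix}\id_\rho & *\\ 0 & 0\end{smallmatrix}\right)$. Writing $\overline{\lambda}'^1,\dots,\overline{\lambda}'^\rho$ for the $\rho$ nonzero rows of $\mathcal{M}_{red}$, every $v\in V$ admits a unique expression $v=\sum_{j=1}^{\rho}\mu_j\overline{\lambda}'^j$, and the leading identity block forces the $j$-th coordinate of $v$ to coincide with $\mu_j$ for $j=1,\dots,\rho$. Consequently, the requirement that every coordinate of $v$ be nonzero already forces $\mu_j\in\FF_3^\times$ for each $j\in\{1,\dots,\rho\}$, giving at most $2^\rho$ admissible vectors.

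Finally I would pass from vectors to lines: each $\FF_3$-line through the origin contains exactly the two nonzero vectors $\{v,-v\}$, and negation preserves the property of having all coordinates nonzero, so the $2^\rho$ admissible vectors pair up into $2^{\rho-1}$ distinct lines. This yields the claimed bound. I do not expect any serious obstacle here; the whole argument is a linear-algebra counting step, and the only subtlety worth spelling out is that $v$ and $-v$ really do parametrize the same cubic field, which is already built into Theorem \ref{theorem: Iso Dedekind I} since its left-hand side is a set of $\FF_3$-lines rather than of individual vectors.
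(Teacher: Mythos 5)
Your proposal is correct and follows essentially the same route as the paper: the discussion preceding the corollary likewise uses the identity block of $\mathcal{M}_{red}$ to force the expansion coefficients $\mu_j$ to be nonzero (at most $2^{\rho}$ vectors) and then pairs $v$ with $-v$ to get $2^{\rho-1}$ lines, dropping the congruence condition since only an upper bound is needed. Your write-up merely spells out these steps more explicitly than the paper does.
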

\begin{remark}
    As before, since $\rho \leq r + 1$, we have that the number of quasi-monogenic Dedekind type I fields of discriminant $D$ is bounded by $2^r$. There are examples where this bound is reached, as shown in the next example.  
\end{remark}
\begin{example}
    Let $D = -27n^2$, where $n = 2\cdot 3\cdot 5 = 30$. A computation on Sage gives the following set of generators for the free part of $E^{-27D}(\QQ)$,
    $$
    \{(-54 : 81 : 1), (-45 : 270 : 1)\}.
    $$
    This yields the following generator matrix: 
    $$
    \mathcal{M} = 
    \begin{pmatrix}
        1&1&1\\
        1&2&0\\
        0&0&1
    \end{pmatrix}.
    $$
    Its reduced row echelon form is the identity matrix, so there are 4 quasi-monogenic cubic number fields with discriminant $D$. These are $\QQ(\sqrt[3]{n})$, $\QQ(\sqrt[3]{60})$, $\QQ(\sqrt[3]{90})$ and $\QQ(\sqrt[3]{150})$. Solving the index form equations, we have that all of these are monogenic.
\end{example}

\section{Computing monogenity of families of cubic number fields with large discriminant (assuming GRH)}
\label{section: computations}

We shall apply the techniques developed to study the monogenity of numerically difficult cubic number fields. In the LMFDB repository, there are 4619 cubic number fields whose monogenity is still unknown as of today (see \cite[\href{https://www.lmfdb.org/NumberField/?degree=3&monogenic=unknown}{List of cubic number fields with unknown monogenity}]{lmfdb}). Of these, 4569 are pure. We shall focus on these fields. Firstly, we will focus on Dedekind type I fields and, secondly, on Dedekind type II fields.

As hinted in the previous sections, the costliest part of our computations will be to determine the Mordell-Weil group of the elliptic curves $E^{-27D}(\QQ)$. We shall assume GRH to do so in a reasonable amount of time. We propose two options to compute $E^{-27D}(\QQ)$ using Magma \cite{magma}. The first one is to compute it directly, for example, from this code in Magma,

\begin{verbatim}
> SetClassGroupBounds("GRH");
> D := -1086061775432017340256300;
> E := EllipticCurve([0,-27*D/4]);
> Generators(E)
\end{verbatim}

The runtime for this computation in some large examples is the following.

\begin{table}[h]
\begin{tabular}{|l|l|}
\hline
\multicolumn{1}{|c|}{$D$} & \multicolumn{1}{c|}{Time (s)} \\ \hline
-14572337954782934700     & 2765.58                       \\ \hline
-20353100118663768300     & 2615.74                       \\ \hline
-32787760398261603075     & 2632.40                       \\ \hline
-61397579028323666700     & 2687.52                       \\ \hline
\end{tabular}
\caption{Timings for computing the Mordell-Weil Group of $E^D$. Times are in seconds for a single thread running on a 3.6 GHz Intel Xeon CPU.}
\end{table}

However, it is substantially faster to compute $E^D(\QQ)$ instead. In our cases, the maximum running time for computing the Mordell-Weil group of $E^D(\QQ)$ is less than 7 seconds. This leads to our second option, which is guaranteed to work if $\hat{\phi}_D(E^{-27D}(\QQ)) = E^D(\QQ)$. Given this condition, take $r$ generators for the free part of $E^D(\QQ)$, where $r$ is the rank of $E^D$. Then, take their preimages by $\hat{\phi}_D$. These cannot have a preimage by $\phi_D$, so we obtain $r$ generators of $E^{-27D}(\QQ)$. 

\subsection{Dedekind type I fields}

Out of the 4619 fields mentioned above, 4389 are of Dedekind type I. The discriminants of each of these are all included in table (\ref{table: Dedekind type I}). In all of these cases, we have $\hat{\phi}_D(E^{-27D}(\QQ)) = E^D(\QQ)$. In order to compute the quasi-monogenic Dedekind type I fields with each discriminant, we shall follow the following steps: 
\begin{enumerate}
    \item Compute generators of $E^{D}(\QQ)$ (assuming $GRH$ to speed up computations) and compute their preimages by $\hat{\phi}_D$.
    \item From these generators, compute a generator matrix $\mathcal{M}$ as described in (\ref{subsection: Main Dedekind type I results}).
    \item Recover the quasi-monogenic cubic number fields using theorem (\ref{theorem: Iso Dedekind I}).
\end{enumerate}
The results are showcased in table (\ref{table: Dedekind type I}).
\newpage

\begin{table}[H]
\begin{tabular}{|l|c|c|l|}
\hline
\multicolumn{1}{|c|}{discriminant} & \multicolumn{1}{c|}{\begin{tabular}[c]{@{}c@{}}rank\\ (GRH)\end{tabular}} & \multicolumn{1}{c|}{\begin{tabular}[c]{@{}c@{}}number of \\ fields\end{tabular}} & \multicolumn{1}{c|}{\begin{tabular}[c]{@{}c@{}}Quasi-monogenic\\ cubic number fields\\ $(\QQ(\sqrt[3]{\cdot}))$\end{tabular}} \\
\hline
-14572337954782934700 & 2 & 84 &-\\
\hline
-20353100118663768300 & 2 & 84 & 868227230 (*)\\
\hline
-32787760398261603075 & 1 & 128 & 1101980715 (*)\\
\hline
-51655732481903321772 & 0 & 256 & 1383175794 (*)\\
\hline
-61397579028323666700 & 2 & 256 & 1507973610 (*)\\
\hline
-76693861692819528300 & 3 & 256 & 1685382270 (*)\\
&&& 1358418109620 \\
\hline
-83568927010773879675 & 2 & 86 & 1759302545 (*)\\
&&& 1257901319675 \\
\hline
-120339254895514386732 & 1 & 256 & 2111163054 (*)\\
\hline
-183177901067973914700 & 3 & 256 & 2604681690 (*)\\
&&& 39807350268270 \\
&&& 58050540825030 \\
\hline
-322848328011895761075 & 2 & 256 & 3457939485 (*)\\
\hline
-752120343096964917075 & 3 & 256 & 5277907635 (*)\\
&&& 131710185031425 \\
&&& 156579685807545 \\
\hline
-1130137123238311488300 & 2 & 512 & 6469693230 (*)\\
\hline
-4826941224142299290028 & 4 & 172 & 13370699342 (*)\\
&&& 171385624165756 \\
&&& 228384915460702 \\
\hline
-5541131507306210919675 & 1 & 256 & 14325749295 (*)\\
\hline
-6426401038059274202700 & 4 & 512 & 15427730010 (*)\\
&&& \textbf{241999372936860} \\
&&& \textbf{308955721180260} \\
&&& \textbf{977578112083650} \\
&&& \textbf{1844647394105670} \\
\hline
-8975717152330721820300 & 1 & 512 & 18232771830 (*)\\
\hline
-10860617754320173402563 & 0 & 256 & 20056049013 (*)\\
\hline
-22164526029224843678700 & 2 & 512 & 28651498590 (*)\\
\hline
-30168382650889370562675 & 2 & 172 & 33426748355 (*)\\
\hline
-43442471017280693610252 & 3 & 512 & 40112098026 (*)\\
&&& \textbf{77737245974388} \\
\hline
-120673530603557482250700 & 2 & 340 &-\\
\hline
-271515443858004335064075 & 3 & 512 & 100280245065 (*)\\
&&& \textbf{71700375221475} \\
\hline
-1086061775432017340256300 & 2 & 1024 & 200560490130 (*)\\
\hline
\end{tabular}
\caption{A summary of all the families of Dedekind type I fields with the same discriminant such that at least one field has unknown monogenity in LMFDB. Each row contains a discriminant $D$, the rank of $E^D$ assuming GRH, the number of fields with discriminant $D$ up to isomorphism and the corresponding quasi-monogenic cubic number fields. The trivially monogenic ones are marked with an asterisk (*). The ones whose monogenity remains unknown are written in bold.}
\label{table: Dedekind type I}
\end{table}

The total number of fields analyzed is 7466. Out of these, 35 are quasi-monogenic, listed in the table. 22 of them are trivially monogenic, so this leaves us with 13 fields whose monogenity cannot be determined using our techniques. Upon checking the LMFDB repository, we note that only 6 of these fields have unknown monogenity, which are marked in the table. 

\subsection{Dedekind type II fields}

Out of the 4619 cubic number fields with unknown monogenity, 175 are of Dedekind type II. The discriminants of these fields are included in table (\ref{table: Dedekind type II}). 172 of the 175 fields just mentioned have discriminant $-13408170067061942472300$. We proceed as in the Dedekind type I case, but now we apply theorem (\ref{theorem: Iso Dedekind II}) instead. The results can be seen in table (\ref{table: Dedekind type II}). In total, we have analysed 341 fields, none of which are quasi-monogenic.

\begin{table}[H]
\begin{tabular}{|l|c|c|l|}
\hline
\multicolumn{1}{|c|}{discriminant} & \multicolumn{1}{c|}{\begin{tabular}[c]{@{}c@{}}rank\\ (GRH)\end{tabular}} & \multicolumn{1}{c|}{\begin{tabular}[c]{@{}c@{}}number of \\ fields\end{tabular}} & \multicolumn{1}{c|}{\begin{tabular}[c]{@{}c@{}}Quasi-monogenic\\ cubic number fields\\ $(\QQ(\sqrt[3]{\cdot}))$\end{tabular}} \\
\hline
-12627147759764869116703083 & 0 & 1 &-\\
\hline
-13408170067061942472300 & 3 & 172 &-\\
\hline
-15943127309229420300 & 2 & 84 &-\\
\hline
-13952310163435944300 & 1 & 84 &-\\
\hline
\end{tabular}
\caption{A summary of all the families of Dedekind type I fields with the same discriminant such that at least one field has unknown monogenity in LMFDB. Note that none of the fields are quasi-monogenic.}
\label{table: Dedekind type II}
\end{table}

\subsection*{Acknowledgements} The second  author gratefully acknowledges the Universitat Politècnica de Catalunya and Banco Santander for the financial support of his predoctoral FPI-UPC grant. The first author is supported by projects PID2022-136944NB-I00 and  2021 SGR 01468.

\newpage
\bibliographystyle{plain}
\bibliography{Determining_Monogenity_of_Pure_Cubic_Fields_V2}

\end{document}